\newcounter{mycomment}
\newcommand{\mycomment}[2][]{%
   \refstepcounter{mycomment}%
   \ifcase\numexpr \themycomment- 4*((\themycomment+3)/4 -1) \relax
   {%
      \setstretch{0.7}
      \todo[color=SeaGreen,size=\scriptsize]{%
         \textbf{[\uppercase{#1}\themycomment]:} #2}%
   }
   \or
   {%
      \setstretch{0.7}
      \todo[color=Salmon,size=\scriptsize]{%
         \textbf{[\uppercase{#1}\themycomment]:} #2}%
   }
   \or
   {%
      \setstretch{0.7}
      \todo[color=Thistle,size=\scriptsize]{%
         \textbf{[\uppercase{#1}\themycomment]:} #2}%
   }
   \else
   {%
      \setstretch{0.7}
      \todo[color=Tan,size=\scriptsize]{%
         \textbf{[\uppercase{#1}\themycomment]:} #2}%
   }
   \fi
}
\theoremstyle{definition}
\newtheorem{question}{Question}[section]
\newtheorem{conjecture}{Conjecture}[section]
\newtheorem{corollary}{Corollary}[section]
\newtheorem{theorem}{Theorem}[section]
\newtheorem{remark}{Remark}[section]
\newtheorem{lemma}{Lemma}[section]
\newtheorem{definition}{Definition}
\newtheorem{proposition}{Proposition}
\pgfplotsset{compat=1.18}
\title{Induced Distributions from Generalized Unfair Dice} 
\author{Douglas T. Pfeffer\,\orcidlink{0000-0001-8287-2686}, J. Darby Smith\,\orcidlink{0000-0002-3643-0868}, William Severa\,\orcidlink{0000-0002-8740-220X}}
\date{}
\begin{document}
\maketitle

\begin{abstract}
In this paper we analyze the probability distributions associated with rolling  (possibly unfair) dice infinitely often.  Specifically, given a $q$-sided die, if $x_i\in\{0,\ldots,q-1\}$ denotes the outcome of the $i^{\text{th}}$ toss, then the distribution function is $F(x)=\mathbb{P}[X\leq x]$, where $X = \sum_{i=1}^\infty x_i q^{-i}$. We show that $F$ is singular and establish a piecewise linear, iterative construction for it. We investigate two ways of comparing $F$ to the fair distribution---one using supremum norms and another using arclength. In the case of coin flips, we also address the case where each independent flip could come from a different distribution. In part, this work aims to address outstanding claims in the literature on Bernoulli schemes.  The results herein are motivated by emerging needs, desires, and opportunities in computation to leverage physical stochasticity in microelectronic devices for random number generation.
\end{abstract}

\newcommand{\Addresses}{{
  \bigskip
  \footnotesize

\setlength{\parindent}{0cm}
    \noindent Douglas T. Pfeffer\,\orcidlink{0000-0001-8287-2686}\par\nopagebreak\textsc{Department of Mathematics, University of Tampa, 401 W. Kennedy Blvd. Tampa, FL 33606}\par\nopagebreak
  e-mail: \texttt{dpfeffer@ut.edu}

  \medskip

  J. Darby Smith\,\orcidlink{0000-0002-3643-0868}\par\nopagebreak  \textsc{Neural Exploration and Research Laboratory, Center for Computing Research, Sandia National Laboratories, 1515 Eubank Blvd.~SE, Albuquerque, NM 87123}\par\nopagebreak
  e-mail: \texttt{jsmit16@sandia.gov}

  \medskip

  William Severa\,\orcidlink{0000-0002-8740-220X}\par\nopagebreak  \textsc{Neural Exploration and Research Laboratory, Center for Computing Research, Sandia National Laboratories, 1515 Eubank Blvd.~SE, Albuquerque, NM 87123}\par\nopagebreak
  e-mail: \texttt{wmsever@sandia.gov}
}}

\section{Introduction\label{sec:intro}}
Contemporary computing approaches are dominated by deterministic operations.  In terms of both the algorithmic approach and the underlying computing devices, determinism is deeply woven into our computing mindset.  At the device level, stochastic behavior is often seen as a defect. Noise and fluctuations have been eliminated or constrained wherever possible.  This is often beneficial; resistance to noise is one of the key benefits of \textit{digital} electronics.  However, a direct consequence is that our everyday computers are deterministic.

Of course, randomness plays a role in many algorithms, including those from scientific computing and cryptography.  For example, particle methods and other probabilistic approaches are often applied to high-dimensional physics problems where direct numerical solutions can be intractable.  However, there is an inherent misalignment between stochastic behavior and deterministic hardware.

Well-distributed and difficult-to-predict numbers can be generated by a Pseudo-Random Number Generator (pRNG).  These methods (generally) take a \textit{seed} value and generate a sequence of corresponding numbers through iteration.  The sequence of values can appear to be random, but are entirely determined by the seed and the pRNG.  The quality of the `random' numbers is dependent on the quality of the pRNG.  While this method is sufficient for many applications, deficiencies in either the seed setting or the pRNG can be disastrous.\footnote{We provide two examples, though we suggest the reader to explore the fascinating world of deficient pRNGs.  The first is of historical notoriety: The RANDU generator's iterates in three dimensions fall along planes~\cite{markowsky2014sad}, making predictions trivially easy in this case. The second is that system time is a common seed value.  Knowing the system time at seed setting allows players to predict future events in games such as Pok\'emon~\cite{pika}.}

Sources of noise can be used to help improve the quality of pRNG number generation.  Small timing differences in keyboard input and mouse input or fluctuations in measured quantities, such as WiFi signal, can be used.  In modern approaches, these noisy signals feed what is called an \textit{entropy pool}.  This entropy pool (e.g.~via \texttt{/dev/random/} on Linux) can then be combined, hashed, and otherwise manipulated to produce yet more unpredictable ``random'' numbers.

Unfortunately, this entropy pool approach has three main challenges:
\begin{enumerate}
\item The sources of noise may not be truly random.
\item The pRNGs still produce non-random numbers.
\item The entropy pool can be depleted.
\end{enumerate}

We believe these motivate the study of probabilistic computing devices and, consequently, the study of how to best use a naturally stochastic computing device.  This motivation is shared by those in the computing field, where probabilistic devices and true Random Number Generators (tRNGs) are an area of active study \cite{misra2022probabilistic,chowdhury2023accelerated,chowdhury2023full,aadit2022massively,kaiser2021probabilistic}. Certain devices, such as magnetic tunnel junctions \cite{rehm2022stochastic,liu2022random} or tunnel diodes \cite{bernardo2017extracting}, can be made to behave as an effective Bernoulli random variable (hereafter, a coin flip). These devices have some probability $p$ of returning 1 and a probability $1-p$ of returning 0. 

Simple random variables have great utility and can be exploited to return samples from a variety of distributions~\cite{gryszka2021biased,flegal2012exact}. Furthermore, when used as input to novel and emerging hardware, like spiking neurons in a neuromorphic computer~\cite{schuman2022opportunities,young2019review,aimone2021roadmap}, such simple stochastic input can be used to find approximate solutions to NP-hard problems such as MAXCUT~\cite{theilman2022stochastic,theilman2023goemans}.

These formulations tend to model coin flips as precisely that---A two-outcome, even draw.  We note that these devices conceivably behave not as just coins but as $q$-sided dice. Indeed, current devices considered for other purposes can be in one of five states~\cite[Fig. 2e]{leonard2023stochastic,leonard2022shape}. Though the bulk of current investigations examine coin-like devices, such as $p$-bits~\cite{camsari2017stochastic}, in the future we may find that dice-like devices are more attractive for physical reasons (more stable or more efficient) or perhaps even for computational ones (reduced burden or circuitry).  An additional complication is that devices may not be fair, and it is critical to understand any departure from uniform in the general setting.

To address these questions, we examine binary encoded distributions from Bernoulli coin flips and $q$-ary encoded distributions from $q$-sided die rolls. We revisit classic results in the Bernoulli case for a class of singular measures and state a `folk theorem' on the extension to $n$-sided dice in Section \ref{sec:bernoulli}.  We establish this extension and provide a proof of the `folk theorem' in Section \ref{sec:qsideddie}.  Finally, in Section \ref{sec:arclength}, we seek to address the question `Given an unfair $q$-sided die, how does its CDF compare to the uniform, fair die case?' To this end, we provide two comparative tools one can use.  In Section \ref{supcompare}, Theorem \ref{theorem: uniformcomp} provides an easily calculable upper bound on $\|x-F(x)\|_\infty$, and in Section \ref{comparearclength}, Theorem \ref{theorem:arclength} provides a formula for calculating the arclength of $F(x)$ after finitely many dice rolls (which can then be compared to the fair arclength of $\sqrt{2}$).  These formulas answer curiosity in and of themselves as well as stand to provide a basis for error analysis in the future of probabilistic computing. We conclude our effort with a discussion in Section \ref{sec:conclusion} that seeks to frame these results in the context of novel mathematical directions and future stochastic computing.

\section{Existing results on Bernoulli schemes}\label{sec:bernoulli}
Consider a series of Bernoulli outcomes $x_i$ on $\{0,1\}$. As coin flips, we will call $1$ `heads' and $0$ `tails'. In the independent and identically distributed (i.i.d.) case, let $\mathbb{P}[x_i=0]=p_0$ and, consequently, $\mathbb{P}\left[x_i=1\right]=1-p_0=p_1$.  If $p_0=0.5$, we say we are flipping a `fair' coin. Given $n$ outcomes, define
\[
X_n := \frac{x_1}{2^1} + \frac{x_2}{2^2} + \cdots + \frac{x_n}{2^n} = \sum_{i=1}^n \frac{x_i}{2^i}.
\]
Each $X_n$ is a decimal number in $[0,1]$ formed from a binary encoding of $n$ outcomes $\{x_i\}_{i=1}^n$.

Let $X := \lim_{n\to\infty} X_n$ be the encoded value in $[0,1]$ obtained by flipping our coin infinitely many times.  We ask, given $y\in[0,1]$, what is $\mathbb{P}[X\leq y]$? That is, what is the cumulative distribution function (CDF) of $X$?

Consider first the case that $p=0.5$. For a finite number of flips $N$, the probability of getting any single value of $X$ is the probability of getting a particular string of outcomes. In this fair case, that probability is $1/2^N$. Extending this probability mass function to a probability density function on the real line, we need to divide this probability by the width of the unit of mass it represents. Given the binary encoding, this width is $1/2^N$. Hence the probability density induced on the real line by $N$ flips is 1. The limit in $N$ is still 1, and the associated probability density function (PDF) of $X$ in the fair case is the uniform PDF.  Therefore the CDF of $X$ in the fair case is given by 
\begin{equation}\label{eq:uniform}
F(x) = \int_{-\infty}^x \mathbbm{1}_{[0,1]} \mathop{d\mu} = \begin{cases}0 &\text{if} \ x<0\\
x &\text{if} \ 0\leq x \leq 1\\
1 &\text{if} \ x>1
\end{cases}.
\end{equation}
We remark that the uniform measure (the PDF) on $[0,1]$ coincides with Lebesgue measure restricted to $[0,1]$.

What happens when $p\neq 0.5$?~\cite[Example 31.1]{billingsley} shows that the distribution turns out to be \textit{singular}---a probability distribution concentrated on a set of Lebesgue measure zero. This is observed by showing that the cumulative distribution function $F(x)$ for $X$ in the non-fair case is continuous, strictly increasing, and has $F'(x)=0$ almost everywhere. A related concept is the Cantor distribution, a probability distribution whose cumulative distribution function is the Cantor function.

After demonstrating that the cumulative distribution function, $F(x)$, for the unfair coin is singular, Billingsley establishes a recursion definition for it. With $\mathbb{P}[x_i=0]=p_0$ and $p_1 = 1-p_0$,
\begin{equation} \label{unfairrecursive}
    F(x) = 
    \begin{cases} 
    p_0F(2x) &\text{if} \ 0\leq x \leq \textstyle \frac{1}{2}\\
    p_0 + p_1F(2x-1) &\text{if} \ \textstyle \frac{1}{2} \leq x \leq 1 
    \end{cases}.
\end{equation}
As examples, we graph this recursive function for four different coins $p_0 = .15, .25, .40, \text{ and } .49$
in Figure \ref{fig:unfair_example}.

\begin{figure}
    \centering
    \includegraphics[width=.65\textwidth]{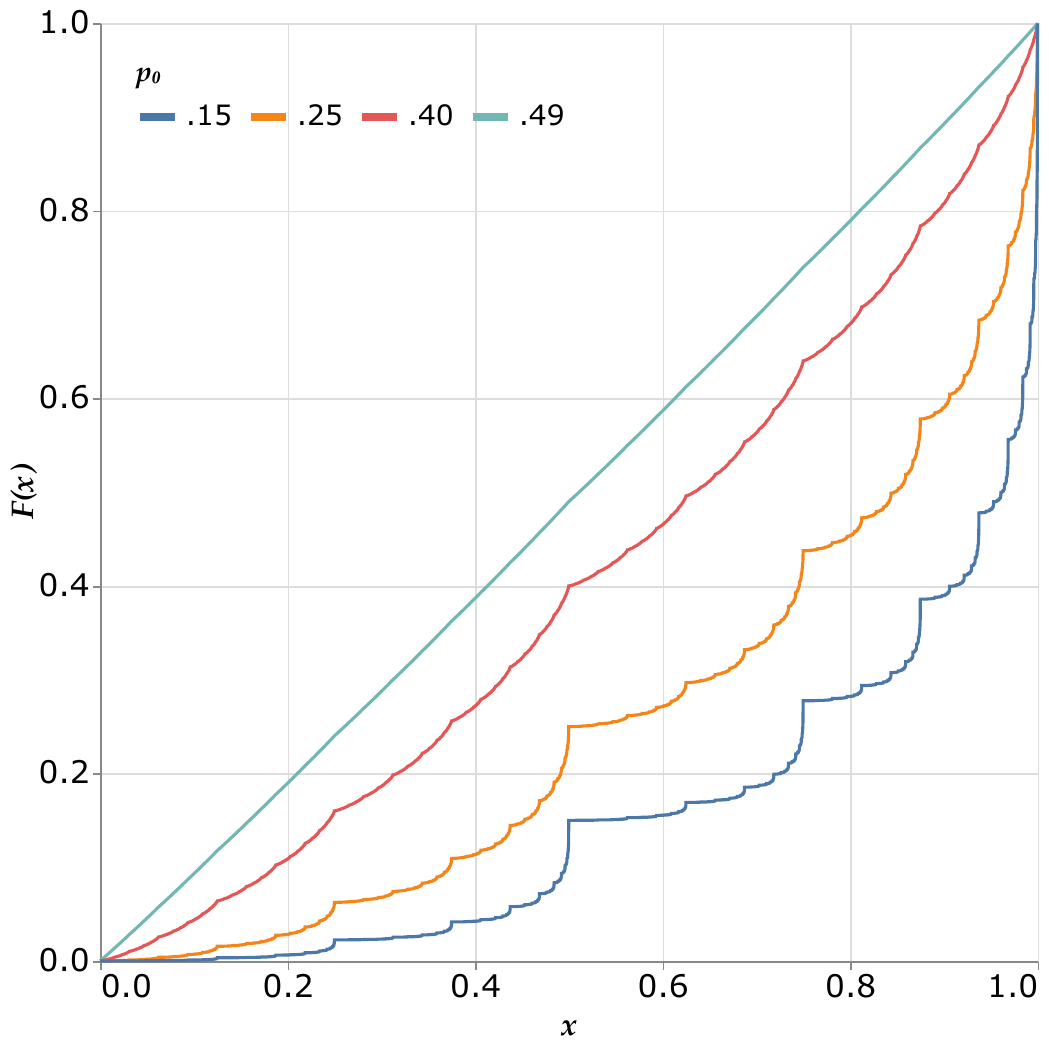}
    \caption{The CDF $F(x)$ shown for various unfair coins with values for $p_0$ indicated by color. In each case, $p_1 = 1 - p_0$}
    \label{fig:unfair_example}
\end{figure} 

As any unfair coin produces a singular measure, all such unfair coin measures are singular with respect to Lebesgue measure (on $[0,1]$) and therefore singular with respect to the uniform measure on $[0,1]$.

In the sequel, we will extend this classic CDF result into a series of results on unfair $q$-sided dice.  This natural extension of the CDF formula to unfair dice is conjectured by Billingsley as an exercise and, to our knowledge no proof of this result exists in the literature. The following is a reinterpretation of this conjecture:

\begin{conjecture}[Problem 31.1 in \cite{billingsley}] \label{conj:billingsleydice}
    Let $p_0, \ldots, p_{q-1}$ be non-negative numbers adding to 1, where $q\geq 2$; suppose there is no $j$ such that $p_j=1$. Let $x_1, x_2,\ldots$ be independent, identically distributed random variables such that $P[x_i=j]=p_j$, $0\leq j < q$, and put $X = \sum_{i=1}^\infty x_i q^{-i}$. If $F(x)=\mathbb{P}[X\leq x]$ is the distribution function of $X$, then
    \begin{enumerate}[(i)]
        \item $F$ is continuous, \label{billingsleyi}
        \item $F$ is strictly increasing on $[0,1]$ if and only if $p_j>0$ for all $j$,\label{billingsleyii}
        \item if $p_j = q^{-1}$ for all $j$, then $F(x)=x$ on $[0,1]$; and \label{billingsleyiii}
        \item if $p_j\neq q^{-1}$ for some $j$, then $F$ is singular. \label{billingsleyiv}
    \end{enumerate}
\end{conjecture}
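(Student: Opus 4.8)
The plan is to prove the four parts of Conjecture~\ref{conj:billingsleydice} by generalizing the $q=2$ arguments that Billingsley gives for coins, since each claim has a natural base-$q$ analogue. The central tool throughout will be the self-similar (recursive) structure of $F$: partitioning $[0,1]$ into the $q$ subintervals $\left[\frac{j}{q},\frac{j+1}{q}\right]$ for $0\le j<q$, I expect $F$ to satisfy
\begin{equation}\label{eq:qrecursion}
F\!\left(\tfrac{x+j}{q}\right)=\left(\sum_{k=0}^{j-1}p_k\right)+p_j\,F(x),\qquad 0\le x\le 1,
\end{equation}
the direct analogue of \eqref{unfairrecursive}. This identity comes from conditioning on the first roll $x_1$: the event $x_1=j$ has probability $p_j$ and forces $X$ into the $j$-th subinterval, with the conditional law of the remaining digits being another independent copy of $X$. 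So my first step is to establish \eqref{eq:qrecursion} rigorously from the definition $X=\sum_{i\ge1}x_iq^{-i}$.

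For part~\eqref{billingsleyi}, continuity, I would argue that $F$ has no jumps: a jump at a point $y$ would require $\mathbb{P}[X=y]>0$, but since no single digit value has probability one (the hypothesis that no $p_j=1$), any fixed base-$q$ expansion has probability $\prod_i p_{x_i}=0$, and the at most two expansions of a $q$-adic rational contribute only finitely many such products, each still zero. Hence $F$ is continuous. For part~\eqref{billingsleyii}, strict monotonicity, the ``if'' direction follows because if all $p_j>0$, then every cylinder set of finite base-$q$ strings has positive probability, so $F$ increases across every subinterval; conversely if some $p_j=0$, the corresponding subinterval of length $q^{-1}$ (at the appropriate scale) carries zero probability and $F$ is flat there. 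For part~\eqref{billingsleyiii}, I would note that when $p_j=q^{-1}$ for all $j$, the function $G(x)=x$ satisfies \eqref{eq:qrecursion}, and by uniqueness of the bounded solution to this contraction-type functional equation (or by the PDF-density argument already sketched in the excerpt for the fair case), $F(x)=x$.

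The main work, and the main obstacle, is part~\eqref{billingsleyiv}: singularity whenever some $p_j\ne q^{-1}$. The plan is to mimic Billingsley's strong-law argument. For Lebesgue-almost every $x\in[0,1]$, the base-$q$ digits behave like an i.i.d.\ \emph{uniform} sample, so by the strong law of large numbers the empirical frequency of each digit $j$ tends to $q^{-1}$. Under the die measure, however, the strong law forces the frequency of digit $j$ to tend to $p_j$ instead. Since some $p_j\ne q^{-1}$, these two limiting frequency profiles disagree, so the set on which the digit frequencies match $(p_0,\dots,p_{q-1})$ has full measure under the die's distribution but Lebesgue measure zero; this concentrates the distribution on a Lebesgue-null set and gives singularity. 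The delicate point I anticipate is handling the measure-theoretic bookkeeping cleanly: one must pass correctly between statements about the random variable $X$, statements about digit frequencies along a typical expansion, and statements about the induced measure on $[0,1]$, while accounting for the $q$-adic rationals with non-unique expansions (a countable, hence null, exceptional set). Equivalently, one can show $F'(x)=0$ Lebesgue-a.e.\ by estimating the ratio $\bigl(F(x+h)-F(x)\bigr)/h$ over the base-$q$ cylinder containing $x$ and showing it tends to $0$ a.e.; verifying this limit rigorously (rather than just heuristically) is where the real effort lies.
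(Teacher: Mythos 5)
Your proposal follows essentially the same route as the paper: continuity via $\mathbb{P}[X=x]=0$ from the infinite product of digit probabilities, strict monotonicity via the cylinder-set increments $F\left(\frac{k+1}{q^n}\right)-F\left(\frac{k}{q^n}\right)=p_{u_1}\cdots p_{u_n}$ together with density of $q$-adic rationals, the identity $F(x)=x$ via the recursion obtained by conditioning on the first roll, and singularity by contrasting the $\mu$-almost-sure digit frequencies $p_j$ (Borel's law of large numbers) with the Lebesgue-almost-everywhere frequencies $q^{-1}$ (the normal number theorem) to produce disjoint supports. The paper packages the last step through an explicit disjoint-supports proposition to conclude $F'=0$ a.e., but this is the same argument you sketch, so your plan is sound and matches the paper's proof.
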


In addition to the preceding proposition on a single (unfair) die, recent research has referenced pairs of dice as well; notably, \cite{cornean2022characterization} and \cite{cornean2022singular}. While their work is focused on the generalized \textit{stationarity} setting, they provide a discussion of the problem's history in the i.i.d. case---a so-called `Bernoulli scheme'. They identify the $q=2$ CDF as a `Riesz-Nagy' function, and explicitly examine the Cantor function for $q=3$ (Problem 31.2 in \cite{billingsley}).

In \cite[\S1.1]{cornean2022characterization}, the authors go onto to make two claims without proof:
\begin{enumerate}[I.]
    \item The measures $\text{d}F=\mu$ in all the Bernoulli schemes for any $q$ are again all singular with respect to one another. \label{folklore1}
    \item Only one measure is absolutely continuous relative to Lebesgue measure; namely where all $j\in\{0,\ldots,q-1\}$ are equally likely. In this case, $\text{d}F=\lambda$ is Lebesgue measure itself on $[0,1]$.\label{folklore2}
\end{enumerate}
The first of which they refer to as a `folk theorem'. While they refer the reader to Section 14 of \cite{billingsleyergodic} for a discussion on the matter, we were unable to reproduce these observations from this text. That said, \cite[Example 3.5]{billingsleyergodic} does allude to the base-$q$ case for $q\geq 2$. Here, similar questions are posed as those \cite[Problems 31.1 and 32.1]{billingsley}, but still no proofs are given. 

In part, the next section will provide proofs to the aforementioned facts. Specifically, we prove Conjecture 
\ref{conj:billingsleydice} in Theorems \ref{maintheorem1} and \ref{maintheorem2}. We prove claims (\ref{folklore1}) and (\ref{folklore2}) in 
Theorems \ref{maintheorem2} and \ref{maintheorem3}. Afterward, in Theorem \ref{thm:noniidcoinlips}, we provide a novel analogous result to \cite[Example 31.1]{billingsley} for independent, but not identically distributed, coin flip sequences.

\section{Results on infinite (unfair) dice rolling.} \label{sec:qsideddie}
In this section we begin by establishing qualitative results for the cumulative distribution functions associated with sequences obtained from unfair dice rolls. We then follow this discussion with the development of some machinery one can use to compare the unfair cumulative distribution function with the uniform distribution.

\subsection{Analysis of the CDF.}

Our first goal is prove parts (\ref{billingsleyi}) and (\ref{billingsleyii}) of Conjecture \ref{conj:billingsleydice}: 

\begin{theorem} \label{maintheorem1}
Consider a $q$-sided die, where $x_i\in\{0,\ldots,q-1\}$ denotes the outcome of the $i^\text{th}$ toss and $p_j:=\mathbb{P}[x_i=j]$ for $j\in\{0,\ldots,q-1\}$. Given $X = \sum_{i=1}^\infty x_iq^{-i}$, if $F(x)=\mathbb{P}[X\leq x]$ is the distribution function obtained after tossing the die an infinite number of times, then
\begin{enumerate}[(i)]
    \item $F$ is continuous;
    \item $F$ is strictly increasing on $[0,1]$ if and only if $p_j>0$ for all $j$;
    \item $F'$ exists almost everywhere in $[0,1]$;
\end{enumerate}
\end{theorem}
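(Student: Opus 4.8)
The plan is to prove the three parts in a natural order, leveraging the self-similar (base-$q$) structure of $X$. The fundamental observation I would exploit throughout is the \emph{cylinder-set} decomposition: if we condition on the first $n$ outcomes $x_1=a_1,\ldots,x_n=a_n$, then the encoded value lies in the dyadic-type interval $[c,c+q^{-n}]$ where $c=\sum_{i=1}^n a_i q^{-i}$, and conditional on this prefix the tail $\sum_{i>n} x_i q^{-i}$ is an independent rescaled copy of $X$ itself. This yields the generalized recursion
\begin{equation}\label{eq:recursion}
F\left(\tfrac{x+k}{q}\right)=\sum_{j=0}^{k-1}p_j+p_k\,F(x),\qquad 0\le x\le 1,\ 0\le k\le q-1,
\end{equation}
which is the $q$-ary analogue of \eqref{unfairrecursive} and will be the workhorse for all three parts.

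For continuity (part (i)), the cleanest route is to bound the jump of $F$ at any point. Since $F$ is a CDF it is automatically right-continuous and nondecreasing, so it suffices to rule out jumps, i.e.\ to show $\mathbb{P}[X=y]=0$ for every $y$. Given a fixed $y$, the event $\{X=y\}$ forces the digit sequence $(x_i)$ to agree with a prescribed base-$q$ expansion of $y$ (up to the at most two expansions a $q$-adic rational admits); by independence the probability of matching the first $n$ digits is a product of $n$ factors each at most $\max_j p_j=:m<1$ (the hypothesis that no $p_j=1$ is exactly what gives $m<1$), hence $\mathbb{P}[X=y]\le 2m^n\to 0$. Thus $F$ has no jumps, and being monotone and right-continuous with no jumps it is continuous.

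For the strict-increase characterization (part (ii)), I would argue both directions. If some $p_j=0$, then the open interval of values whose base-$q$ expansion begins with the digit $j$ receives zero probability, so $F$ is constant on a nondegenerate subinterval and hence not strictly increasing. Conversely, if every $p_j>0$, then for any $0\le x<y\le 1$ I would find a cylinder (a finite prefix $a_1\ldots a_n$) whose corresponding interval $[c,c+q^{-n}]$ is contained in $(x,y)$; such a prefix exists by taking $n$ large enough. The probability assigned to that cylinder is $\prod_{i=1}^n p_{a_i}>0$, forcing $F(y)-F(x)>0$. The recursion \eqref{eq:recursion} can be used to make the bookkeeping of which cylinders sit inside a given interval fully rigorous.

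Part (iii), that $F'$ exists almost everywhere, I expect to be the least labor: $F$ is monotone nondecreasing on $[0,1]$, so by the Lebesgue differentiation theorem for monotone functions $F'$ exists a.e.\ and is (Lebesgue-)integrable. This is immediate from (i) together with monotonicity and needs no special structure. The main obstacle is really part (ii), specifically the converse direction: one must argue carefully that between any two distinct reals there is a full base-$q$ cylinder interval, handling the edge cases at $q$-adic rationals where expansions are nonunique and where a naive prefix might spill outside $(x,y)$. I would address this by choosing $n$ with $q^{-n}<(y-x)/q$ so that at least one of the $q$-adic subintervals of length $q^{-n}$ lands strictly inside $(x,y)$, and then invoke positivity of all $p_j$ to conclude that cylinder carries positive mass.
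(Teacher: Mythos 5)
Your proposal is correct and follows essentially the same route as the paper: continuity via showing $\mathbb{P}[X=y]=0$ (atomlessness from the product of digit probabilities being bounded by $(\max_j p_j)^n\to 0$), strict monotonicity via positive mass on base-$q$ cylinder intervals together with their density in $[0,1]$, and a.e.\ differentiability from Lebesgue's theorem on monotone functions. If anything, your handling of the two possible expansions of a $q$-adic rational and the explicit use of $m=\max_j p_j<1$ is slightly more careful than the paper's phrasing.
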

\begin{proof}
    For an arbitrary sequence $(u_1,u_2,\ldots)$ taken from $\{0,1,\ldots, q-1\}$, let $p_{u_i}:= \mathbb{P}[x_i=u_i]$. Since each $p_{u_i}<1$, we have
\begin{equation} \label{zerolimit}
    \mathbb{P}[x_i = u_i, \ i=1,2\ldots] = \lim_{n\to \infty} p_{u_1}\cdot p_{u_2}\cdot\ldots\cdot p_{u_n} = 0.
\end{equation}
Letting $x = \sum_{i=1}^\infty \frac{u_i}{q^i}$ be the (essentially) unique base-$q$ expansion for a number in $[0,1]$, we see immediately that $\mathbb{P}[X = x]=0$. Hence,
\[\mathbb{P}[X \leq x] = \mathbb{P}[X < x] + \mathbb{P}[X=x]=\mathbb{P}[X<x].\] 
It follows that $F$ is left-continuous. As a distribution, $F$ must be right-continuous. Therefore $F$ is everywhere continuous.

Now let $k\in \mathbb{N}$ so that $0\leq \frac{k}{q^n}\leq 1$. We can see that 
\[
\frac{k}{q^n} = \sum_{i=1}^n \frac{u_i}{q^i}
\]
for some $u_i\in \{0,1,\ldots,q-1\}$. Since $F$ is continuous, 
\begin{align}
\begin{split}
    F\left( \frac{k+1}{q^n}\right) - F\left( \frac{k}{q^n}\right) &= \mathbb{P}\left[X < \frac{k+1}{q^n}\right] - \mathbb{P}\left[X < \frac{k}{q^n}\right] \\
    &=\mathbb{P}\left[\frac{k}{q^n} < X < \frac{k+1}{q^n}\right]\\
    &=\mathbb{P}[x_i = u_i, \ i=1,2,\ldots,n] \\
    &= p_{u_1}\cdot\ldots\cdot p_{u_n}.
\end{split}
\label{eq:interval}
\end{align}
Therefore, since base-$q$ expansions are dense in $[0,1]$, $F$ is strictly increasing on $[0,1]$ if and only if $p_j>0$ for all $j$.  In any case, $F$ is non-decreasing and therefore, by Theorem 31.2 in \cite{billingsley}, $F'$ exists almost everywhere in $[0,1]$. 
\end{proof}

To proceed, we require two results on the frequency of digits in our base-$q$ expansions (both of which are due to \'Emile Borel). To discuss both, we fix the following notation: Given a finite set of $b$-digits, $B$, and an infinite sequence $\omega$ taken from $B$, let $\#_\omega(a,n)$ denote the number of times $a$ shows up in the first $n$ terms of $\omega$.

The first result we need is known as \textit{Borel's law of large numbers} (see \cite{2011aaf1-ab6b-3b14-bb2c-7b5bffa2d318} for an analytic proof) which states that if $S_n$, $n\geq 1$, is the number of successes in the first $n$ independent repetitions of a Bernoulli trial with success probability $p$, $0<p<1$, then
\[
\mathbb{P}\left( \lim_{n\to\infty} \frac{S_n}{n} =p \right) =1.
\]
In the context of our paper, our Bernoulli trial is the tossing of a $q$-sided die. Borel's law of large numbers is then asserting that, with probability 1, the frequency of each individual outcome tends toward its probability. From this, Lemma~\ref{lemma:blln} directly follows.

\begin{lemma} \label{lemma:blln}
    Consider a $q$-sided die, where $x_i\in\{0,\ldots,q-1\}$ denotes the outcome of the $i^\text{th}$ toss and $p_j:=\mathbb{P}[x_i=j]$ for $j\in\{0,\ldots,q-1\}$. Given $X = \sum_{i=1}^\infty x_iq^{-i}$, if $F(x)=\mathbb{P}[X\leq x]$ is the distribution function obtained after tossing the die an infinite number of times with $\mu$ as its associated probability measure, and $\omega_q = (d_1(x), d_2(x), \ldots)$ is the sequence of digits in the non-terminating base-$q$ expansion of an $x\in[0,1]$,  then
    \[
    \mu\left( \left\{ x\in (0,1]\colon \lim_{n\to\infty} \frac{\#_{\omega_q}(j,n)}{n} = p_j \right\} \right) = 1.
    \]
\end{lemma}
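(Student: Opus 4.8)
The plan is to recognize the lemma as the measure-theoretic restatement of Borel's law of large numbers, transported from sequence space to $[0,1]$ through the base-$q$ encoding map. First I would set up the correspondence: let $T\colon \{0,1,\ldots,q-1\}^{\mathbb{N}} \to [0,1]$ be given by $T(u_1,u_2,\ldots) = \sum_{i=1}^\infty u_i q^{-i}$, so that $X = T(x_1,x_2,\ldots)$ and $\mu$ is exactly the pushforward under $T$ of the i.i.d.\ product measure on outcome sequences. The key observation is that for every $x\in(0,1]$ that is not a base-$q$ rational $k/q^n$, the preimage $T^{-1}(x)$ is the single sequence $\omega_q = (d_1(x),d_2(x),\ldots)$ giving its unique—hence non-terminating—base-$q$ expansion. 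Consequently, off the set of base-$q$ rationals, the digit functions $d_i$ coincide with the coordinate outcomes $x_i$.

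Next I would dispose of the exceptional set. The base-$q$ rationals form a countable set, and by \eqref{zerolimit} (equivalently, by the continuity of $F$ established in Theorem~\ref{maintheorem1}) each singleton carries $\mu$-measure zero, so their union is $\mu$-null. Therefore the empirical digit count $\#_{\omega_q}(j,n)$ of the non-terminating expansion of $X$ agrees $\mu$-almost surely with $S_n^{(j)} := \#\{i \le n : x_i = j\}$, the number of times outcome $j$ appears in the first $n$ tosses.

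With the correspondence in hand, I would apply Borel's law of large numbers directly. For each fixed $j$, the events $\{x_i = j\}$ are i.i.d.\ Bernoulli trials with success probability $p_j$. When $0 < p_j < 1$, Borel's law gives $\mathbb{P}\!\left(\lim_{n\to\infty} S_n^{(j)}/n = p_j\right) = 1$; the degenerate case $p_j = 0$ is handled separately, since then $\mathbb{P}[x_i = j] = 0$ forces $S_n^{(j)} = 0$ for all $n$ almost surely, and the limit is $0 = p_j$ (the case $p_j = 1$ cannot occur, as each $p_{u_i} < 1$). Pushing this full-measure event forward through $T$ and discarding the $\mu$-null set of base-$q$ rationals yields exactly the stated identity. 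To obtain the conclusion simultaneously for all $j$, it suffices to intersect the finitely many full-measure events $\{\lim_n S_n^{(j)}/n = p_j\}$, which is again of full measure.

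The main obstacle is not the probabilistic content—Borel's law does the heavy lifting—but the bookkeeping of the encoding map, specifically verifying that the ``non-terminating expansion'' convention in the statement is consistent with the almost-sure behavior of the random sequence $(x_1,x_2,\ldots)$. One must confirm that, with $\mu$-probability one, $X$ avoids the base-$q$ rationals, so that its expansion is unique, non-terminating, and literally equal to the outcome sequence; this is precisely where the hypothesis $p_j < 1$ for all $j$ and the measure-zero computation \eqref{zerolimit} are essential.
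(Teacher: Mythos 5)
Your proof is correct and takes the same route as the paper, which simply asserts that the lemma ``directly follows'' from Borel's law of large numbers; you supply the bookkeeping the paper leaves implicit (the pushforward through the base-$q$ encoding map, the $\mu$-nullity of the countable set of base-$q$ rationals so that the non-terminating expansion agrees almost surely with the outcome sequence, and the finite intersection of full-measure events over $j$). No gaps.
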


The second result we need is Borel's \textit{Normal Number Theorem}. While this result  was originally established  in \cite{mileBorelLesPD}, we refer the reader to \cite[Chapter 8]{kuipers2012uniform} for more details. By definition, given a finite set of $b$ digits, $B$, an infinite sequence $\omega$ on this set is \textbf{(simply) normal} if
 \[
 \lim_{n\to \infty} \frac{\#_\omega(a,n)}{n} = \frac{1}{b}
 \]
 for any $a\in B$. Thus, a sequence $\omega$ is normal for a set $B$ if the relative frequency of each item in $B$ is `fair'. The normal number theorem says that almost every real number $x$ is normal in any integral base $b>1$. Utilizing our notation, we formally write
 
 \begin{lemma}[Normal Number Theorem] \label{lemma:bnnt}
     Let $x\in [0,1]$, $b>1$ be an integer, and $B=\{0,\ldots,b-1\}$. If $\omega_b$ is the sequence of digits from $B$ that form the base-$b$ expansion of $x$, then
         \[
    \lambda\left( \left\{ x\in [0,1]\colon \lim_{n\to\infty} \frac{\#_{\omega_b}(j,n)}{n} = \frac{1}{q} \right\} \right) = 1
    \]
    for all $j\in B$, where $\lambda$ is Lebesgue measure on $[0,1]$.
 \end{lemma}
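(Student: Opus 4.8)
The plan is to recognize Lemma~\ref{lemma:bnnt} as the Lebesgue-measure specialization of the digit-frequency phenomenon already captured probabilistically in Lemma~\ref{lemma:blln}, and to derive it from Borel's law of large numbers in exactly the same way. The single piece of new input required is the identification of Lebesgue measure $\lambda$ on $[0,1]$ with the product measure obtained by treating the base-$b$ digits as independent, uniformly distributed coordinates; once that is in place, the conclusion is a direct transcription of the earlier argument.

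First I would fix the integer $b>1$ and, for each $i\geq 1$, consider the digit function $d_i\colon[0,1]\to B$ sending $x$ to the $i^{\text{th}}$ term of its non-terminating base-$b$ expansion. The key structural observation is that, under $\lambda$, these digit functions are i.i.d.\ and uniform. Indeed, for each $j\in B$ the event $\{d_i=j\}$ is a disjoint union of $b^{\,i-1}$ half-open intervals, each of length $b^{-i}$, so $\lambda(\{d_i=j\})=1/b$; and since the cylinder set $\{d_1=j_1,\ldots,d_n=j_n\}$ is a single interval of length $b^{-n}=\prod_{i=1}^n \lambda(\{d_i=j_i\})$, the digits are mutually independent. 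The countable set of $b$-adic rationals, at which the base-$b$ expansion is ambiguous, has $\lambda$-measure zero and may be discarded without affecting any measure-one statement.

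With this in hand I would fix $j\in B$ and define the indicator $Y_i(x)=\mathbf{1}_{\{d_i(x)=j\}}$. By the previous step the $Y_i$ form an i.i.d.\ sequence of Bernoulli random variables with success probability $p=1/b$, and by construction $\sum_{i=1}^n Y_i(x)=\#_{\omega_b}(j,n)$. Borel's law of large numbers --- already invoked to obtain Lemma~\ref{lemma:blln} --- then applies verbatim with $S_n=\#_{\omega_b}(j,n)$, yielding
\[
\lambda\!\left(\left\{x\in[0,1]\colon \lim_{n\to\infty}\frac{\#_{\omega_b}(j,n)}{n}=\frac{1}{b}\right\}\right)=1,
\]
which is precisely the assertion of the lemma.

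The only step I expect to present any real obstacle is the first: verifying that Lebesgue measure coincides with the uniform product measure on digits, i.e.\ that the $d_i$ are genuinely independent and uniform under $\lambda$. Everything afterward is a direct copy of the reasoning used for Lemma~\ref{lemma:blln}, the sole change being that the fair probabilities $p_j=1/b$ make the induced measure equal to $\lambda$ itself. In writing the statement I would also be careful to record the correct limiting frequency $1/b$ (the displayed $1/q$ in the lemma should read $1/b$) and to note explicitly that the discarded $b$-adic rationals do not affect the measure-one conclusion.
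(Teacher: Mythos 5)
The paper offers no proof of this lemma: it is stated as Borel's Normal Number Theorem and justified only by citation (to Borel's original paper and to Kuipers--Niederreiter), so there is no in-paper argument to compare yours against. Your proof is the standard one and is correct: the verification that the base-$b$ digit functions $d_i$ are i.i.d.\ uniform under Lebesgue measure (via the cylinder-set computation $\lambda(\{d_1=j_1,\ldots,d_n=j_n\})=b^{-n}=\prod_i \lambda(\{d_i=j_i\})$, together with discarding the null set of $b$-adic rationals) is exactly the step that reduces the statement to Borel's law of large numbers with $p=1/b$, which is the same engine the paper uses for Lemma~\ref{lemma:blln}. You are also right that the displayed limit $\frac{1}{q}$ in the statement is a typo for $\frac{1}{b}$ (the lemma is stated for a general base $b$, and the paper only ever applies it with $b=q$).
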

 Next, we record a measure-theoretic definition and proposition that have been localized to $[0,1]$. Both are reproduced directly from \cite[pg.410]{billingsley}.
 
\begin{definition} \label{def:disjointsupp}
 Two measures $\mu$ and $\lambda$ on $[0,1]$ have \textit{disjoint supports} if there exist Borel sets $S_\mu$ and $S_\lambda$ such that
 \[
 \mu([0,1]\setminus S_\mu) = 0, \hspace{.25in} \lambda([0,1]\setminus S_\lambda)=0, \hspace{.25in} \text{and} \hspace{.25in} S_\mu \cap S_\lambda=\emptyset.
 \]
\end{definition}

\begin{proposition} \label{prop:disjointsupports}
    If $F\colon [0,1]\to [0,1]$ is a differentiable function for which $\mu((a,b]) = F(b)-F(a)$, then $\mu$ and Lebesgue measure $\lambda$ have disjoint supports if and only if $F'(x)=0$ except on a set of Lebesgue measure 0.
\end{proposition}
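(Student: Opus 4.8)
The plan is to recognize that Definition~\ref{def:disjointsupp} is just the usual notion of \emph{mutual singularity} $\mu\perp\lambda$, and then to reduce the entire statement to the Lebesgue decomposition of $\mu$ together with the identification of $F'$ as the density of its absolutely continuous part. First I would unpack the definition: if $S_\mu,S_\lambda$ witness disjoint supports, then $S_\mu\subseteq[0,1]\setminus S_\lambda$, so $\mu$ is carried by the $\lambda$-null set $S_\mu$; conversely, any Borel set $E$ with $\mu(E)=0$ and $\lambda([0,1]\setminus E)=0$ yields witnesses $S_\mu=[0,1]\setminus E$ and $S_\lambda=E$. Thus disjoint supports holds if and only if $\mu\perp\lambda$. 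I would also record at the outset that, because $\mu$ is a nonnegative measure with $\mu((a,b])=F(b)-F(a)$, the function $F$ is nondecreasing, so its derivative satisfies $F'\ge 0$ wherever it exists.

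Next I would bring in the Lebesgue decomposition $\mu=\mu_{ac}+\mu_s$, with $\mu_{ac}\ll\lambda$ and $\mu_s\perp\lambda$, together with the key analytic input: the differentiation theory for distribution functions developed in \cite[\S31]{billingsley}, which says that the a.e.\ derivative $F'$ is exactly the Radon--Nikodym density of the absolutely continuous part, i.e.\ $\mu_{ac}(B)=\int_B F'\,d\lambda$ for every Borel $B\subseteq[0,1]$. Granting this, both implications fall out of a short chain of equivalences. Since $F'\ge 0$, we have $\mu_{ac}=0$ if and only if $\mu_{ac}([0,1])=\int_{[0,1]}F'\,d\lambda=0$, if and only if $F'=0$ $\lambda$-almost everywhere.

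To finish, I would connect $\mu_{ac}=0$ to singularity in both directions. If $\mu\perp\lambda$, then $\mu$ lives on some $\lambda$-null set $N$; hence $\mu_{ac}([0,1]\setminus N)\le\mu([0,1]\setminus N)=0$, while $\mu_{ac}(N)=\int_N F'\,d\lambda=0$ because $\lambda(N)=0$, so $\mu_{ac}=0$ and therefore $F'=0$ a.e. Conversely, if $F'=0$ a.e., then $\mu_{ac}=0$, so $\mu=\mu_s\perp\lambda$. Combined with the first paragraph, this is precisely the claimed equivalence between disjoint supports and $F'$ vanishing off a Lebesgue-null set.

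The main obstacle is the analytic heart rather than the bookkeeping: establishing that the pointwise derivative $F'$ really is the density of $\mu_{ac}$, i.e.\ the differentiation theorem itself. The subtle point it handles---and the reason a naive Vitali-covering argument does not close on its own---is controlling $\mu$ on the Lebesgue-null ``leftover'' set produced when one covers $\{F'=0\}$ by small intervals and only captures it up to a $\lambda$-null remainder; the differentiation theorem (equivalently, the fact that a singular measure has symmetric derivative $0$ at $\lambda$-a.e.\ point) is exactly what forbids the singular part from concentrating there. I would therefore cite this result from \cite{billingsley} rather than reprove it, since the Proposition is explicitly reproduced from that same development.
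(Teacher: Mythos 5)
The paper offers no proof of this proposition at all: it is reproduced verbatim from \cite[pg.410]{billingsley}, so there is no in-paper argument to compare against. Your proposal is correct and is the standard proof of that cited fact---identify ``disjoint supports'' with mutual singularity $\mu\perp\lambda$, invoke the Lebesgue decomposition $\mu=\mu_{ac}+\mu_s$, and use the differentiation theorem of \cite[\S 31]{billingsley} to identify $F'$ as the density of $\mu_{ac}$, so that $F'=0$ $\lambda$-a.e.\ is equivalent to $\mu_{ac}=0$, i.e.\ to $\mu\perp\lambda$. Your decision to cite rather than reprove the differentiation theorem is consistent with how the paper itself treats this material (it likewise cites Theorem 31.2 of \cite{billingsley} for the a.e.\ existence of $F'$), and you correctly flag that this theorem, not the bookkeeping around supports, is where the real content lies.
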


We may now state and prove our second main result which proves parts (\ref{billingsleyiii}) and (\ref{billingsleyiv}) of Conjecture \ref{conj:billingsleydice} and simultaneously proves claim (\ref{folklore2}):
\begin{theorem}\label{maintheorem2}
Consider a $q$-sided die, where $x_i\in\{0,\ldots,q-1\}$ denotes the outcome of the $i^\text{th}$ toss and $p_j:=\mathbb{P}[x_i=j]$ for $j\in\{0,\ldots,q-1\}$. Given $X = \sum_{i=1}^\infty x_iq^{-i}$, if $F(x)=\mathbb{P}[X\leq x]$ is the cumulative distribution function obtained after tossing the die an infinite number of times, then
\begin{enumerate}[(i)]
    \item If $p_j= \textstyle \frac{1}{q}$ for all $j$, then $F(x)=x$ on $[0,1]$ and \label{maintheorem2item1}
    \item If $p_k\neq \textstyle \frac{1}{q}$ for some $k$, then $F$ is singular.\label{maintheorem2item2}
\end{enumerate}
In either case, $F$ is given by the following recursion formula:
    \begin{equation} \label{qsideddierecursion}
    F(x) = 
    \begin{cases} 
    p_0F(qx) &\text{if} \ 0\leq x \leq \textstyle \frac{1}{q}\\
    p_0+p_1F(qx-1) &\text{if} \ \textstyle \frac{1}{q} \leq x \leq \frac{2}{q}\\
    \ \ \ \ \ \ \ \ \vdots &\ \ \ \ \ \ \ \ \ \vdots\\
    (p_0+p_1+\ldots+p_{q-2}) + p_{q-1}F(qx-(q-1)) &\text{if} \ \textstyle \frac{q-1}{q} \leq x \leq 1\\
    \end{cases}.
    \end{equation}
\end{theorem}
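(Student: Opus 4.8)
The plan is to establish the recursion formula first, since it holds unconditionally, and then read off parts (i) and (ii) with the appropriate additional input. For the recursion I would exploit the self-similarity of the base-$q$ expansion: write $X = \frac{x_1}{q} + \frac{1}{q}X'$, where $X' = \sum_{i=1}^\infty x_{i+1}q^{-i}$ is, by the i.i.d.\ assumption, independent of $x_1$ and identically distributed to $X$. Conditioning on the first toss gives
\[
F(x) = \mathbb{P}[X\leq x] = \sum_{j=0}^{q-1} p_j\,\mathbb{P}[X'\leq qx-j] = \sum_{j=0}^{q-1} p_j\,F(qx-j).
\]
Fixing $x\in[\frac{m}{q},\frac{m+1}{q}]$ so that $qx\in[m,m+1]$, I would observe that $F(qx-j)=1$ for $j<m$ (since then $qx-j\geq 1$), $F(qx-j)=0$ for $j>m$ (since then $qx-j\leq 0$), and only the $j=m$ term survives nontrivially. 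Collapsing the sum yields $F(x)=(p_0+\cdots+p_{m-1})+p_m F(qx-m)$, which is exactly \eqref{qsideddierecursion}.

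For part (i), the fair case, I would invoke the increment identity \eqref{eq:interval} from the proof of Theorem~\ref{maintheorem1}: with every $p_j=\frac{1}{q}$, the mass of each elementary interval is $F(\frac{k+1}{q^n})-F(\frac{k}{q^n})=p_{u_1}\cdots p_{u_n}=q^{-n}$. Telescoping from $0$ shows $F(k/q^n)=k/q^n$ for all $q$-adic rationals $k/q^n$, and since these are dense and $F$ is continuous (Theorem~\ref{maintheorem1}), we conclude $F(x)=x$ on $[0,1]$.

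For part (ii), the singular case, I would combine the two Borel results. Let $\mu$ be the measure with $\mu((a,b])=F(b)-F(a)$. Lemma~\ref{lemma:blln} gives $\mu(S_\mu)=1$ for the set $S_\mu$ of $x$ whose base-$q$ digit frequencies converge to $(p_0,\ldots,p_{q-1})$, while the Normal Number Theorem (Lemma~\ref{lemma:bnnt}) gives $\lambda(S_\lambda)=1$ for the set $S_\lambda$ of $x$ whose digit frequencies all converge to $\frac{1}{q}$. If $p_k\neq\frac{1}{q}$ for some $k$, then no $x$ can lie in both sets, so $S_\mu\cap S_\lambda=\emptyset$; these are precisely the disjoint supports of Definition~\ref{def:disjointsupp}. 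In particular $\mu$ is concentrated on $S_\mu$ with $\lambda(S_\mu)=0$, which is the definition of singularity, and Proposition~\ref{prop:disjointsupports} upgrades this to $F'=0$ $\lambda$-almost everywhere.

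The main obstacle I anticipate is the bookkeeping in part (ii): one must invoke Proposition~\ref{prop:disjointsupports} compatibly with the fact that Theorem~\ref{maintheorem1} only guarantees $F'$ exists almost everywhere rather than everywhere. The cleanest route is to derive singularity directly from the disjoint full-measure sets $S_\mu,S_\lambda$, so that $\mu$ lives on a Lebesgue-null set by definition, and then treat the $F'=0$ a.e.\ conclusion as a corollary via the Proposition. By contrast, the recursion and the fair case reduce to routine computations once the self-similar decomposition and the increment formula \eqref{eq:interval} are in hand.
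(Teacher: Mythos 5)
Your proposal is correct and follows essentially the same route as the paper: the recursion comes from conditioning on the first toss (the paper writes the case analysis directly rather than collapsing the full sum $\sum_j p_j F(qx-j)$, but it is the same decomposition), part (i) uses the increment identity \eqref{eq:interval} plus density of $q$-adic rationals and continuity, and part (ii) uses Lemma~\ref{lemma:blln} and Lemma~\ref{lemma:bnnt} to produce disjoint full-measure sets and then invokes Proposition~\ref{prop:disjointsupports}. Your closing remark---that singularity is cleanest read off directly from $\mu$ being concentrated on a Lebesgue-null set, with the $F'=0$ a.e.\ statement as a corollary---is a fair refinement of the paper's phrasing but not a different argument.
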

\begin{proof}
While (\ref{maintheorem2item1}) will follow from the recursion formula established independently, we can also use the setting detailed in \eqref{eq:interval} to observe that, if $p_j=\textstyle \frac{1}{q}$ for all $j$, then
\[
F\left( \frac{k}{q^n}+\frac{1}{q^n}\right) - F\left( \frac{k}{q^n}\right) = F\left( \frac{k+1}{q^n}\right) - F\left( \frac{k}{q^n}\right) = p_{u_1}\cdot\ldots\cdot p_{u_n} = \frac{1}{q^n}.
\]
Hence, due to the density of base-$q$ expansions in $[0,1]$, $F(x)=x$.

We now establish (\ref{maintheorem2item2}). Take $x\in(0,1]$ and let $\omega_q = (d_1(x), d_2(x),\ldots)$ be the sequence of digits in its non-terminating base-$q$ expansion. If $\mu$ represents our probability measure, then $\mu[x\colon d_i(x)=j]=p_j$ for $j\in\{0,\ldots,q-1\}$. For every $j$, form
\[
S_j:= \left\{ x\in (0,1]\colon \lim_{n\to\infty} \frac{\#_{\omega_q}(j,n)}{n} = p_j \right\} \hspace{.25in} \text{and consider} \hspace{.25in} \widetilde{S}:=\bigcap_{j=0}^{q-1} S_j.
\]
Lemma \ref{lemma:blln} asserts that $\mu(S_j)=1$ for every $j$. Thus, by subadditivity of the measure $\mu$,
\[
\mu((\widetilde{S})^c) = \mu\left( \bigcup_{j=0}^{q-1} S_j^c \right) \leq \sum_{j=0}^{q-1} \mu(S_j^c) = 0,
\]
and therefore $\mu(\widetilde{S})=1$. Similarly, for every $j$ form
\[
T_j:= \left\{ x\in (0,1]\colon \lim_{n\to\infty} \frac{\#_{\omega_q}(j,n)}{n} = \frac{1}{q} \right\} \hspace{.25in} \text{and consider} \hspace{.25in} \widetilde{T}:=\bigcap_{j=0}^{q-1} T_j.
\]
Lemma \ref{lemma:bnnt} asserts that $\lambda(T_j)=1$ for every $j$, where $\lambda$ is Lebesgue measure. Thus, again by the subadditivity of $\lambda$,  $\lambda(\widetilde{T})=1$.

Now suppose that $p_k\neq \frac{1}{q}$ for some $k$. Then, by the uniqueness of limits, $S_k\cap T_k = \emptyset$ and therefore $\widetilde{S}\cap \widetilde{T}=\emptyset$. By Definition \ref{def:disjointsupp}, $\mu$ and $\lambda$ are seen to have disjoint supports. It now follows from Proposition \ref{prop:disjointsupports} that $F'(x)=0$ except on a set of Lebesgue measure 0 and therefore $F$ is singular.

Finally, we establish the recursion formula given in (\ref{qsideddierecursion}). Note that $[0,1]$ can be divided into $q$ intervals $[0, \textstyle \frac{1}{q}], [\textstyle \frac{1}{q}, \frac{2}{q}],\ldots, [\textstyle \frac{q-1}{q},1]$ --- the so-called base-$q$ intervals of rank 1. All cases in the recursion proceed in an identical fashion. As such, we provide an explicit proof of the last case of the recursion formula only. 

Suppose $x\in [\textstyle \frac{q-1}{q},1]$, the $q^\text{th}$ base-$q$ interval of rank 1. Here, $X \leq x$ can occur in $q$ different ways. Specifically, either $X$ lies in one of the previous base-$q$ intervals, or it lies in the last interval with $x$. Thus,
\begin{align*}
    \mathbb{P}[X \leq x] &= \mathbb{P}\left[ x_1=0 \ \text{or} \  \ldots \ \text{or} \ x_{1}=q-2 \ \text{or} \  \left(x_1=q-1 \ \text{and} \ \frac{q-1}{q} + \sum_{i=2}^\infty \frac{x_i}{q^i} \leq x\right)\right]\\
    &= (p_0 + \ldots + p_{q-2}) + p_{q-1}\mathbb{P}\left[q-1 + \sum_{i=2}^\infty \frac{x_i}{q^{i-1}} \leq qx\right]\\
    &= (p_0 + \ldots + p_{q-2}) + p_{q-1}\mathbb{P}\left[\sum_{i=1}^\infty \frac{x_{i+1}}{q^{i}} \leq qx-(q-1)\right]\\
    &= (p_0 + \ldots + p_{q-2}) + p_{q-1}\mathbb{P}\left[X \leq qx-(q-1)\right]\\
    &= (p_0 + \ldots + p_{q-2}) + p_{q-1}F(qx-(q-1)).
\end{align*}
Therefore, when $x\in [\textstyle \frac{q-1}{q},1]$, we have the recursion 
\[
F(x) = (p_0 + \ldots + p_{q-2}) + p_{q-1}F(qx-(q-1)).
\]
The rest of the cases follow similarly.
\end{proof}

\begin{remark} \label{cantor}
Note that the \textit{Cantor distribution} is the probability distribution whose cumulative distribution function is the Cantor function. This distribution is often given as an example of a singular distribution. As a result, it is worth noting that the singular distribution obtained in Theorem \ref{maintheorem2} is a generalization of the Cantor distribution. Indeed, if we let $q=3$, $p_0=p_2=0.5$, and $p_1=0$, Theorem \ref{maintheorem2} states that the resulting cumulative distribution function, $F(x)$, is singular and is given by the following recursion formula:
\[
F(x) = \begin{cases}
\frac{1}{2}F(3x) &\text{if} \ 0\leq x \leq \frac{1}{3}\\
\frac{1}{2}&\text{if} \ \frac{1}{3}\leq x \leq \frac{2}{3}\\
\frac{1}{2} + \frac{1}{2}F(3x-2) &\text{if} \ \frac{2}{3}\leq x \leq 1\\
\end{cases}
\]
By comparing this formula to that given in \cite{Dobos} and \cite[pg.9]{cantor1}, we see that this formula exactly defines the Cantor distribution whose graph is the `Devil's Staircase'.
\end{remark}

Notably, the proof given in Theorem \ref{maintheorem2} can be modified to form a stronger conclusion on singularity. Specifically, we can compare the probability measures obtained from differently weighted dice. The following result proves claim (\ref{folklore1}):

\begin{theorem}\label{maintheorem3}
Consider two $q$-sided dice with sides taken from $Q:=\{0, \ldots, q-1\}$. Let $x_i\in Q$ denote the outcome of the $i^\text{th}$ toss of one die with corresponding probabilities $p_j:=\mathbb{P}[x_i=j]$ for $j\in Q$, and let $\widetilde{x}_k$ and $\widetilde{p}_k$ be defined similarly for the second die (with $k\in Q)$. Given $X = \sum_{i=1}^\infty x_iq^{-i}$ and $\widetilde{X} = \sum_{i=1}^\infty \widetilde{x}_iq^{-i}$, put $F(x)=\mathbb{P}[X\leq x]$ and $\widetilde{F}(x)=\mathbb{P}[\widetilde{X}\leq x]$ as the respective cumulative distribution functions obtained after tossing the corresponding die an infinite number of times. If there exists an outcome $t\in Q$ so that $p_t \neq \widetilde{p}_t$, then the associated probability measures, $\mu$ and $\widetilde{\mu}$, are mutually singular. \label{twodice1}
\end{theorem}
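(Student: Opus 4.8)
The plan is to adapt the argument used for part (\ref{maintheorem2item2}) of Theorem \ref{maintheorem2}, replacing the comparison against Lebesgue measure with a direct comparison between the two dice measures. The key observation is that Borel's law of large numbers (Lemma \ref{lemma:blln}) applies separately to each die: it pins down, $\mu$-almost surely, the limiting digit frequencies at the values $p_j$, and $\widetilde{\mu}$-almost surely at the values $\widetilde{p}_j$. Since the hypothesis supplies a digit $t$ with $p_t \neq \widetilde{p}_t$, these two almost-sure behaviors are incompatible at any single point, which is exactly what is needed to produce disjoint supports.

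Concretely, for each $j \in Q$ I would use the non-terminating base-$q$ expansion $\omega_q = (d_1(x), d_2(x), \ldots)$ to define
\[
S_j := \left\{ x \in (0,1] : \lim_{n\to\infty} \frac{\#_{\omega_q}(j,n)}{n} = p_j \right\}, \qquad \widetilde{S}_j := \left\{ x \in (0,1] : \lim_{n\to\infty} \frac{\#_{\omega_q}(j,n)}{n} = \widetilde{p}_j \right\}.
\]
Lemma \ref{lemma:blln}, applied first to the die with probabilities $(p_j)$ and then to the die with probabilities $(\widetilde{p}_j)$, yields $\mu(S_j) = 1$ and $\widetilde{\mu}(\widetilde{S}_j) = 1$ for every $j$. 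Setting $S := \bigcap_{j=0}^{q-1} S_j$ and $\widetilde{S} := \bigcap_{j=0}^{q-1} \widetilde{S}_j$, the same subadditivity bound as in the proof of Theorem \ref{maintheorem2} gives $\mu(S) = 1$ and $\widetilde{\mu}(\widetilde{S}) = 1$.

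It then remains to verify $S \cap \widetilde{S} = \emptyset$. If some $x$ lay in both, then in particular $x \in S_t \cap \widetilde{S}_t$, forcing $\lim_{n\to\infty} \#_{\omega_q}(t,n)/n$ to equal both $p_t$ and $\widetilde{p}_t$; by uniqueness of limits this contradicts $p_t \neq \widetilde{p}_t$. Hence $S_t \cap \widetilde{S}_t = \emptyset$, so a fortiori $S \cap \widetilde{S} = \emptyset$. The sets $S$ and $\widetilde{S}$ are Borel, being defined by a countable limiting condition on the digit-counting functions; they carry full $\mu$- and $\widetilde{\mu}$-measure respectively; and they are disjoint. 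This is precisely the condition in Definition \ref{def:disjointsupp} --- stated there for a measure against $\lambda$, but valid verbatim for any two measures --- so $\mu$ and $\widetilde{\mu}$ have disjoint supports, i.e.\ they are mutually singular.

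The only genuinely delicate point is ensuring the single law-of-large-numbers lemma is legitimately invoked twice with two different probability vectors. Lemma \ref{lemma:blln} is phrased for ``the'' die and its measure, so I would emphasize that its conclusion is a statement about the measure determined by a given probability vector, applying it once to $(p_j)$ under $\mu$ and once to $(\widetilde{p}_j)$ under $\widetilde{\mu}$. Everything else --- measurability of $S$ and $\widetilde{S}$, the subadditivity estimate, and the uniqueness-of-limits contradiction --- is routine and parallels Theorem \ref{maintheorem2} almost line for line. Notably, no appeal to the Normal Number Theorem (Lemma \ref{lemma:bnnt}) is required here, since Lebesgue measure plays no role in comparing two arbitrary dice.
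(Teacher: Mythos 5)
Your proposal is correct and follows essentially the same route as the paper's own proof: define the frequency sets $S_j$ and $\widetilde{S}_j$ via Lemma \ref{lemma:blln} applied to each die, use subadditivity to get full measure for the intersections, and invoke uniqueness of limits at the digit $t$ with $p_t\neq\widetilde{p}_t$ to conclude disjoint supports. Your explicit remark about invoking the law-of-large-numbers lemma twice with two different probability vectors is a welcome clarification that the paper leaves implicit.
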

\begin{proof}
    It suffices to show that $\mu$ and $\widetilde{\mu}$ have disjoint supports. We will essentially use the argument given in the proof of Theorem \ref{maintheorem2}, but using two probability measures instead of one probability measure and Lebesgue measure.
    
    Take $x\in(0,1]$ and let $\omega_q = (d_1(x), d_2(x),\ldots)$ be the sequence of digits in its non-terminating base-$q$ (equivalently, base-$\widetilde{q}$) expansion. If $\mu$ and $\widetilde{\mu}$ represent our two probability measures, then $\mu[x\colon d_i(x)=j]=p_j$ and  $\widetilde{\mu}[x\colon d_i(x)=k]=\widetilde{p}_k$ for $j,k\in Q$.
    
    For every $j$, form
\[
S_j:= \left\{ x\in (0,1]\colon \lim_{n\to\infty} \frac{\#_{\omega_q}(j,n)}{n} = p_j \right\} \hspace{.25in} \text{and} \hspace{.25in} S:=\bigcap_{j\in Q} S_j.
\]
Lemma \ref{lemma:blln} asserts that $\mu(S_j)=1$ for every $j$. Thus, by subadditivity of $\mu$, we have $\mu(\widetilde{S})=1$. Similarly, for every $k$, form
\[
\widetilde{S_k}:= \left\{ x\in (0,1]\colon \lim_{n\to\infty} \frac{\#_{\omega_{q}}(k,n)}{n} = \widetilde{p_k} \right\} \hspace{.25in} \text{and} \hspace{.25in} \widetilde{S}:=\bigcap_{k\in Q} \widetilde{S_k}.
\]
Again, by Lemma \ref{lemma:blln} and subadditivity of $\widetilde{\mu}$, we have $\widetilde{\mu}(\widetilde{S})=1$.
    
By assumption, there exists an outcome $t$ for which $p_t\neq \widetilde{p}_t$. Thus, by the uniqueness of limits, we have $S_t\cap \widetilde{S_t} = \emptyset$ and therefore $S\cap \widetilde{S} = \emptyset$. Hence, $\mu$ and $\widetilde{\mu}$ have disjoint supports.
\end{proof}

We have so far only addressed i.i.d. random dice rolls and coin flips. For the coin flips case, we can say a little bit more in the independent, but not necessarily identically distributed case.

\begin{theorem} \label{thm:noniidcoinlips}
Suppose we flip an infinite number of 2-sided unfair coins, but each have a different weighting. Specifically, let $x_i\in\{0,1\}$ denote the outcome of the $i^\text{th}$ flip and suppose $0<\mathbb{P}[x_i=0]=:p_{i;0}\neq 0.5$. If $(p_{i;0})\not\to 0.5$, then $F'(x)=0$ almost everywhere and therefore $F$ is singular.
\end{theorem}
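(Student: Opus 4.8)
The plan is to mimic the disjoint-supports strategy used for Theorems \ref{maintheorem2} and \ref{maintheorem3}, but to replace the i.i.d.\ law of large numbers by one for independent, non-identically distributed indicators, and then to compare the measure $\mu$ associated to $F$ against Lebesgue measure $\lambda$. As in those proofs, it suffices (via Proposition \ref{prop:disjointsupports}, noting that $F'$ exists a.e.\ because $F$ is non-decreasing) to exhibit a Borel set $S\subseteq(0,1]$ with $\mu(S)=1$ and $\lambda(S)=0$; this forces $F'=0$ off a Lebesgue-null set and hence $F$ singular.

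First I would convert the hypothesis $(p_{i;0})\not\to\tfrac12$ into a combinatorial statement about coordinates. Since the sequence does not converge to $\tfrac12$, there is an $\varepsilon>0$ and an infinite index set $A=\{\,i : |p_{i;0}-\tfrac12|\geq\varepsilon\,\}$. Splitting $A$ by the sign of $p_{i;0}-\tfrac12$, at least one of $A^{+}=\{\,i\in A : p_{i;0}\geq\tfrac12+\varepsilon\,\}$ or $A^{-}=\{\,i\in A : p_{i;0}\leq\tfrac12-\varepsilon\,\}$ is infinite; without loss of generality assume $A^{+}=\{i_1<i_2<\cdots\}$ is infinite, so $p_{i_k;0}\geq\tfrac12+\varepsilon$ for every $k$. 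Next I would run two strong laws along this fixed subsequence of coordinates. Under $\mu$ the indicators $Y_k:=\mathbf{1}_{\{d_{i_k}(x)=0\}}$ are independent with means $p_{i_k;0}$ and variances at most $\tfrac14$, so Kolmogorov's strong law (the version for independent, non-identically distributed summands, which the Borel law of Lemma \ref{lemma:blln} does not cover) gives $\tfrac1m\sum_{k=1}^m (Y_k-p_{i_k;0})\to 0$ $\mu$-a.s.; since every mean is at least $\tfrac12+\varepsilon$, this yields $\liminf_m \tfrac1m\sum_{k=1}^m Y_k \geq \tfrac12+\varepsilon$ $\mu$-a.s. Under $\lambda$ the coordinate digits are i.i.d.\ fair Bernoulli, hence so is the subsequence $(d_{i_k})$, and the ordinary strong law (in the spirit of Lemma \ref{lemma:bnnt}) gives $\tfrac1m\sum_{k=1}^m Y_k\to\tfrac12$ $\lambda$-a.s. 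Taking $S:=\{\,x\in(0,1] : \liminf_m \tfrac1m\sum_{k=1}^m \mathbf{1}_{\{d_{i_k}(x)=0\}}\geq\tfrac12+\varepsilon\,\}$ then gives $\mu(S)=1$ and $\lambda(S)=0$, so $\mu$ and $\lambda$ have disjoint supports and the conclusion follows exactly as in Theorem \ref{maintheorem2}.

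The step I expect to be the main obstacle is precisely the extraction of the one-sided subsequence $A^{+}$. The naive analogue of the i.i.d.\ argument---comparing the overall frequency $\#_{\omega}(0,n)/n$ under $\mu$ against its value $\tfrac12$ under $\lambda$---fails here, because $(p_{i;0})\not\to\tfrac12$ does not prevent the Cesàro averages $\tfrac1n\sum_{i\leq n} p_{i;0}$ from converging to $\tfrac12$ (for instance if $p_{i;0}$ alternates between $0.4$ and $0.6$); in that case the digit-$0$ frequency tends to $\tfrac12$ under $\mu$ as well, and the total-frequency statistic cannot separate the two measures. Restricting to coordinates on which $p_{i;0}$ stays strictly on one side of $\tfrac12$ is what repairs this, and it is also why a non-i.i.d.\ strong law is needed rather than the Borel law used previously.

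As an alternative that sidesteps the subsequence bookkeeping, one may instead invoke Kakutani's dichotomy for the infinite product measures $\mu$ and $\lambda$ (whose $i$-th marginals are $(p_{i;0},p_{i;1})$ and $(\tfrac12,\tfrac12)$): they are mutually singular precisely when $\sum_i (1-\rho_i)=\infty$, where $\rho_i=\tfrac{1}{\sqrt2}\bigl(\sqrt{p_{i;0}}+\sqrt{p_{i;1}}\bigr)$ is the $i$-th Hellinger affinity. Since $1-\rho_i$ vanishes only at $p_{i;0}=\tfrac12$, is increasing in $|p_{i;0}-\tfrac12|$, and is therefore bounded below by a positive constant for the infinitely many indices in $A$, the hypothesis $(p_{i;0})\not\to\tfrac12$ makes this sum diverge, giving singularity directly; this route makes the role of the hypothesis most transparent, at the cost of importing a result beyond the Borel laws already in play.
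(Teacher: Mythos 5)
Your proof is correct, but it takes a genuinely different route from the paper's. The paper argues pointwise, in the style of Billingsley's Example 31.1: at any $x$ where $F'(x)$ exists and is nonzero, the ratio $\mathbb{P}[X\in I_{m+1}]/\mathbb{P}[X\in I_m]$ of the masses of successive dyadic intervals containing $x$ must converge to $\tfrac12$, while a direct computation shows this ratio equals the single factor $p_{u_{m+1}}$, which by hypothesis does not converge to $\tfrac12$; hence $F'=0$ wherever it exists, which is almost everywhere. That argument needs no law of large numbers at all, only the a.e.\ existence of $F'$ for monotone functions, and it sidesteps the obstacle you correctly identify: because it compares individual terms $p_{u_{m+1}}$ rather than Ces\`aro averages, the alternating $0.4/0.6$ example causes it no trouble and no subsequence extraction is needed. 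What your approach buys in exchange is strictly more: you establish mutual singularity of $\mu$ and Lebesgue measure with an explicit separating set, in the same disjoint-supports spirit as Theorems \ref{maintheorem2} and \ref{maintheorem3}, at the cost of importing Kolmogorov's strong law for independent non-identically distributed summands (whose variance condition $\sum_k \mathrm{Var}(Y_k)/k^2<\infty$ is trivially met here). Your Kakutani alternative goes further still, showing the stated hypothesis is far from sharp: divergence of $\sum_i(1-\rho_i)$ already forces singularity, so for instance $p_{i;0}=\tfrac12+i^{-1/3}\to\tfrac12$ still yields a singular $F$, a case neither the paper's proof nor its theorem statement covers. Both of your routes are sound as written.
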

\begin{proof} 
Analogous arguments to those in Theorem \ref{maintheorem1} demonstrate that $F$ is well-defined, continuous, and increasing, and therefore $F'$ exists almost everywhere in $[0,1]$. 

Suppose that $(p_{i;0})\not\to0.5$. We will demonstrate that $F'(x)=0$. Let $k\in \mathbb{N}$ so that $0\leq \frac{k}{2^n}\leq 1$. Then, $\textstyle \frac{k}{2^n} = \sum_{i=1}^n \frac{u_i}{2^i}$
for some $u_i\in \{0,1\}$ and
\begin{align*}
    F\left( \frac{k+1}{2^n}\right) - F\left( \frac{k}{2^n}\right) &= \mathbb{P}\left[X < \frac{k+1}{2^n}\right] - \mathbb{P}\left[X < \frac{k}{2^n}\right]\\
    &=\mathbb{P}\left[\frac{k}{2^n} < X < \frac{k+1}{2^n}\right]\\
    &=\mathbb{P}[x_i = u_i, \ i=1,2,\ldots,n]\\
    &= p_{u_1}\cdot\ldots\cdot p_{u_n}.
\end{align*}

Let $x$ be given and for each $m\in\mathbb{N}$, choose $k_m$ so that $x \in I_n$, where
\[
I_m = \left( \frac{k_m}{2^m}, \frac{k_m+1}{2^m}\right)
\]
is the dyadic interval of rank $m$ that contains $x$. It follows from the density of dyadics in $[0,1]$, that
\[
    \lim_{m\to\infty} \frac{ \mathbb{P}[X \in I_m]}{2^m} = \lim_{m\to\infty} \frac{ F\left( \frac{k+1}{2^m}\right) - F\left( \frac{k}{2^m}\right)   }{2^m} = F'(x).
\]
Therefore, if we suppose, for the sake of contradiction, that $F'(x)\neq 0$, then on one hand we obtain the following:
\[
    \lim_{m\to\infty} \frac{\frac{ \mathbb{P}[X \in I_{m+1}]}{2^{m+1}}}{ \frac{ \mathbb{P}[X \in I_{m}]}{2^m} } = \frac{F'(x)}{F'(x)}=1.
\]
Thus, 
\begin{equation} \label{1/qcont}
    \lim_{m\to\infty} \frac{ \mathbb{P}[X \in I_{m+1}]}{ \mathbb{P}[X\in I_{m}] } = \frac{1}{2}.
\end{equation}
On the other hand, We know that $I_m$ consists of those numbers in $[0,1]$ whose dyadic expansions match $x$'s for the first $m$ terms. Thus, if $X \in I_m$, then 
\[
X = \sum_{i=1}^m \frac{u_i}{q^i} + \sum_{i=m+1}^\infty \frac{x_i}{q^i}.
\]
This implies that $\mathbb{P}[X \in I_m] = p_{u_1}\cdot p_{u_2}\cdot\ldots \cdot p_{u_m}$. Therefore,
\[
    \frac{ \mathbb{P}[X \in I_{m+1}]}{ \mathbb{P}[X \in I_{m}] } = \frac{  p_{u_1}\cdot p_{u_2}\cdot\ldots \cdot p_{u_m} \cdot p_{u_{m+1}}}{p_{u_1}\cdot p_{u_2}\cdot\ldots \cdot p_{u_m}} = p_{ u_{m+1} }.
\]
We assumed that $(p_{i;0})\not\to 0.5$, therefore
\[
    \lim_{m\to\infty}\frac{ \mathbb{P}[X \in I_{m+1}]}{ \mathbb{P}[X \in I_{m}] } = \lim_{m\to\infty} p_{ u_{m+1} }\neq \frac{1}{2}.
\]
This contradicts the conclusion yielded in (\ref{1/qcont}). Thus, $F'(x) = 0$ almost everywhere and therefore $F$ is singular.
\end{proof}

\section{Comparisons of distributions to uniform.}\label{sec:arclength}
If computational devices can be used to create distributions based on possibly unfair coin tosses or die rolls, it is natural to ask how far away results could be expected to be from uniform.  In practice, such comparisons are likely to be statistical in nature.  However, given the results of the previous sections, we now have firm distributional objects with which to compare. In this section we offer two analytic ways to compare an unfair distribution to the uniform and fair one.  The first is done in the infinite toss limit and utilizes the sup-norm.  The second considers the practicality of the finite and compares a finite number of rolls or tosses to uniform through arclength.

\subsection{Comparison to Uniform under $\|\cdot\|_\infty$} \label{supcompare}
In this section we establish a method to compare the (possibly unfair) distribution $F(x)$ with the uniform (fair) distribution using the sup-norm. To start, consider the operator $T\colon C[0,1]\to C[0,1]$ defined by
\begin{equation} \label{seqdef}
(Tf)(x) =     \begin{cases} 
    p_0f(qx) &\text{if} \ 0\leq x \leq \textstyle \frac{1}{q}\\
    p_0+p_1f(qx-1) &\text{if} \ \textstyle \frac{1}{q} \leq x \leq \frac{2}{q}\\
    \ \ \ \ \ \ \ \ \vdots &\ \ \ \ \ \ \ \ \ \vdots\\
    (p_0+p_1+\ldots+p_{q-2}) + p_{q-1}f(qx-(q-1)) &\text{if} \ \textstyle \frac{q-1}{q} \leq x \leq 1\\
    \end{cases}
\end{equation}
\begin{lemma} \label{lemma:contraction}
    Put $p_{\text{max}} = \max\{p_i\}_{i=1}^q$. If $f,g\in C[0,1]$, then $\|Tf-Tg\|_\infty \leq p_{\text{max}}\|f-g\|_\infty$, and therefore $T$ defines a contraction mapping.
\end{lemma}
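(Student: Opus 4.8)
The plan is to exploit the piecewise structure of $T$ and localize the estimate to each of the $q$ base-$q$ intervals of rank $1$. First I would fix an arbitrary $x\in[0,1]$ and observe that it lies in one of the intervals $[\frac{j}{q},\frac{j+1}{q}]$ for some $j\in\{0,\ldots,q-1\}$; on that interval the operator acts as $(Tf)(x) = (p_0+\cdots+p_{j-1}) + p_j\, f(qx-j)$, and similarly for $g$.

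The key observation---and essentially the only content of the proof---is that the additive prefix $(p_0+\cdots+p_{j-1})$ does not depend on the function, so it cancels when we form the difference $Tf-Tg$. Concretely, on the $j$th interval,
\[
(Tf)(x) - (Tg)(x) = p_j\bigl(f(qx-j) - g(qx-j)\bigr),
\]
whence $|(Tf)(x)-(Tg)(x)| = p_j\,|f(qx-j)-g(qx-j)|$.

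Next I would note that as $x$ ranges over $[\frac{j}{q},\frac{j+1}{q}]$ the affine rescaling $qx-j$ sweeps out all of $[0,1]$, so the factor $|f(qx-j)-g(qx-j)|$ is bounded above by $\|f-g\|_\infty$. Combining this with $p_j\leq p_{\text{max}}$ gives the pointwise bound $|(Tf)(x)-(Tg)(x)|\leq p_{\text{max}}\|f-g\|_\infty$ on each interval. Since $x$ was arbitrary and the $q$ intervals cover $[0,1]$, taking the supremum over $x$ yields $\|Tf-Tg\|_\infty\leq p_{\text{max}}\|f-g\|_\infty$. Finally, because no single $p_j$ equals $1$ while the $p_j$ sum to $1$ with $q\geq 2$, we have $p_{\text{max}}<1$, so $T$ is a contraction in the stated sense.

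I do not anticipate any serious obstacle: the entire argument rests on the cancellation of the prefix constants, after which the remainder is a one-line rescaling estimate. The single point meriting a word of care is the behavior at the shared endpoints $x=\frac{j}{q}$, where two branches of the definition apply at once; since the bound holds under either branch and we only take a supremum, this causes no difficulty, and the continuity of $Tf$ needed to keep the argument inside $C[0,1]$ is already granted by the hypothesis that $T$ maps $C[0,1]$ into itself.
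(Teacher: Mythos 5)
Your proposal is correct and follows essentially the same route as the paper: on each rank-$1$ interval the additive prefix $(p_0+\cdots+p_{j-1})$ cancels in $Tf-Tg$, leaving the factor $p_j\le p_{\text{max}}$ times a rescaled difference bounded by $\|f-g\|_\infty$, and the supremum over the $q$ intervals gives the claim. Your added remarks on the endpoint overlap and on why $p_{\text{max}}<1$ (using the standing assumption that no $p_j$ equals $1$) are sensible refinements that the paper leaves implicit.
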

\begin{proof}
    Suppose $\frac{q-1}{q} \leq x \leq 1$. Then,
    \begin{align*}
        Tf-Tg &= \left(\sum_{j=0}^{q-2}p_j + p_{q-1}f(qx-(q-1))\right) - \left(\sum_{j=0}^{q-2}p_j + p_{q-1}g(qx-(q-1))\right)\\
        &= p_{q-1}(f(qx-(q-1)) - g(qx-(q-1))).
    \end{align*}
    Therefore $\|Tf-Tg\|_\infty = p_{q-1}\|f-g\|_\infty$ when restricted to $\left[\frac{q-1}{q},1\right]$. Similar conclusions hold for the other subsets of the domain. Putting them all together (and supping over all of $[0,1]$), we conclude
    \[
    \|Tf-Tg\|_\infty \leq p_{\text{max}}\|f-g\|_\infty.
    \]
\end{proof}
\begin{theorem} \label{theorem: uniformcomp}
    Given the sequence of functions $(f_n)_{n=0}^\infty$ defined by $f_0=x$ and $f_{n+1}= Tf_n$ and the distribution function $F$ in Theorem \ref{maintheorem2},
    \begin{enumerate}[(i)]
        \item $f_n \to F$
        \item $\|x-F(x)\|_\infty \leq \displaystyle\left( \frac{1}{1-p_{\text{max}}}\right)\|x-f_1(x)\|_\infty$
    \end{enumerate}
\end{theorem}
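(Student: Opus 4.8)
The plan is to recognize the two claims as the standard conclusions of the Banach fixed-point theorem applied to the contraction $T$ furnished by Lemma \ref{lemma:contraction}. First I would pin down the complete metric space on which the iteration actually lives. Although $T$ is stated on all of $C[0,1]$, the honest invariant set is the closed affine subspace $\mathcal{A} = \{f \in C[0,1] : f(0) = 0, \ f(1) = 1\}$: a short check at the interior breakpoints $x = k/q$ shows that $Tf$ is continuous there precisely because the boundary values $f(0)=0$ and $f(1)=1$ are preserved, and one computes directly that $(Tf)(0) = p_0 f(0) = 0$ and $(Tf)(1) = (p_0 + \cdots + p_{q-2}) + p_{q-1} f(1) = 1$. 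Since $f_0(x) = x$ lies in $\mathcal{A}$, every iterate $f_n = T^n f_0$ does too. As a closed subset of the complete space $(C[0,1], \|\cdot\|_\infty)$, $\mathcal{A}$ is itself complete, and $T \colon \mathcal{A} \to \mathcal{A}$ is a contraction with constant $p_{\text{max}} < 1$ by Lemma \ref{lemma:contraction} (the strict inequality holds because no single outcome has probability $1$).

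For part (i), the crucial observation is that the recursion formula \eqref{qsideddierecursion} established in Theorem \ref{maintheorem2} is exactly the assertion $F = TF$; that is, $F$ is a fixed point of $T$. Because $F$ is continuous by Theorem \ref{maintheorem1} and satisfies $F(0) = 0$, $F(1) = 1$, we have $F \in \mathcal{A}$. The Banach fixed-point theorem then supplies both a \emph{unique} fixed point of $T$ in $\mathcal{A}$ and the sup-norm convergence of the iterates to it; identifying that common object with $F$ gives $f_n \to F$.

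For part (ii), I would reproduce the usual a priori error estimate. Induction on the contraction inequality yields $\|f_{n+1} - f_n\|_\infty \leq p_{\text{max}}^{\,n}\|f_1 - f_0\|_\infty$, and a telescoping application of the triangle inequality bounds $\|f_m - f_n\|_\infty$ by $\sum_{k=n}^{m-1} p_{\text{max}}^{\,k}\|f_1 - f_0\|_\infty$ for $m > n$. Letting $m \to \infty$ and summing the geometric series gives $\|F - f_n\|_\infty \leq \frac{p_{\text{max}}^{\,n}}{1 - p_{\text{max}}}\|f_1 - f_0\|_\infty$. Specializing to $n = 0$ and recalling $f_0 = x$ produces exactly $\|x - F(x)\|_\infty \leq \frac{1}{1 - p_{\text{max}}}\|x - f_1(x)\|_\infty$.

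I do not anticipate a genuine obstacle, since the substantive work — the contraction estimate and the identification $F = TF$ — is already carried out in Lemma \ref{lemma:contraction} and Theorem \ref{maintheorem2}. The only step demanding care is the domain bookkeeping flagged above: confirming that the iteration remains inside a complete space into which $T$ maps, so that the fixed-point theorem applies legitimately and the limit is a bona fide continuous function rather than merely a sup-norm limit of continuous functions.
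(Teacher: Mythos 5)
Your proof is correct and follows the same route as the paper's: Lemma \ref{lemma:contraction} plus the Banach fixed-point theorem for part (i), and the telescoping geometric-series estimate $\|f_0-f_\infty\|_\infty\leq\sum_n p_{\text{max}}^n\|f_0-f_1\|_\infty$ for part (ii). The one place you go beyond the paper is the domain bookkeeping: the paper applies the fixed-point theorem on all of $C[0,1]$, but, as you observe, $Tf$ is only single-valued and continuous at the breakpoints $x=k/q$ when $f(0)=0$ and $f(1)=1$, so restricting to the closed affine subspace $\mathcal{A}=\{f\in C[0,1]: f(0)=0,\ f(1)=1\}$ (which $T$ preserves and which contains both $f_0=x$ and $F$) is genuinely necessary rather than merely cautious, and quietly repairs a small imprecision in the published argument.
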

\begin{proof}
    Lemma \ref{lemma:contraction} showed that the operator $T\colon C[0,1]\to C[0,1]$ is a contraction mapping. Thus, since $(C[0,1],\|\cdot\|_\infty)$ is a (non-empty) complete metric space, the Banach Fixed Point Theorem guarantees that $T$ admits a unique fixed point---a function $G\in C[0,1]$ such that $TG=G$, where $G=\lim f_n$. This function is exactly the distribution $F$ in Theorem \ref{maintheorem2}.
    
    The Banach Fixed Point Theorem implies that you'll get the same fixed point no matter what starting function you pick (i.e., the choice of $f_0\in C[0,1]$ is arbitrary). As such, we can choose our initial starting function as $f_0=x$ and denote $F(x) =: f_\infty(x)$. Now, since $x-F(x) = f_0-f_\infty$ is seen to be a telescoping series, ,we have the following:
   \[
\|x-F(x)\| = \|f_0 - f_\infty\| = \left\| \sum_{n=0}^\infty f_n-f_{n+1} \right\| \leq \sum_{n=0}^\infty \|f_n-f_{n+1}\|.
\]
Which, by Lemma \ref{lemma:contraction} and the fact that $|p_{\text{max}}|<1$, yields
\[
\|x-F(x)\|  \leq \sum_{n=0}^\infty (p_{\text{max}})^n\|f_0-f_1\| = \displaystyle\left( \frac{1}{1-p_{\text{max}}}\right)\|f_0-f_1\| = \displaystyle\left( \frac{1}{1-p_{\text{max}}}\right)\|x-f_1\|,
\]
where all norms are sup-norms over $[0,1]$.
\end{proof}

Thus, to understand the sup-norm difference between the distribution $F(x)$ and the uniform distribution $y=x$, it suffices to understand the quantity $\|x-f_1\|$, where $f_1$ is a (finite) piece-wise linear function and has no fractal-like components.

\subsection{Comparisons via Arclength} \label{comparearclength}
The previous section's result gave comparative information about the full singular distribution---one obtained after we roll our unfair die infinitely often. What if we wanted to compare our (possibly unfair) distribution after finitely many rolls? One route is to look at their arclengths. We start by recording a small result from \cite[Theorem 6.22]{cantor1}:

\begin{lemma} \label{lemma:arclength}
    Let $F\colon [0,1]\to \mathbb{R}$ be a continuous, increasing function for which $F(0)=0$ and $F(1)=1$. Then the following two statements are equivalent
    \begin{enumerate}[(i)]
        \item The length of the arc $y=F(x)$ on $[0,1]$ is $2$.
        \item The function $F$ is singular.
    \end{enumerate}
\end{lemma}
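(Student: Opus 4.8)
The plan is to reduce both implications to a single arclength formula for monotone functions, combined with the elementary inequality $\sqrt{1+t^2}\le 1+t$ for $t\ge 0$ (with equality only at $t=0$). First I would record the Lebesgue decomposition of $F$: since $F$ is continuous and increasing, the induced measure splits as $dF = F'(x)\,dx + d\mu_s$, where $F'$ is the a.e.\ derivative (which exists by monotonicity, as in Theorem \ref{maintheorem1}) and $\mu_s$ is singular with respect to Lebesgue measure. Writing $a := \int_0^1 F'(x)\,dx$ and $s := \mu_s([0,1])$, monotonicity together with the normalization $F(1)-F(0)=1$ gives $a+s=1$, and by definition $F$ is singular precisely when $a=0$, equivalently when $F'=0$ almost everywhere.

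The crux is the formula for the length $L$ of the graph $y=F(x)$ on $[0,1]$, namely
\[
L = \int_0^1 \sqrt{1+(F'(x))^2}\,dx + s.
\]
I would obtain this by viewing the graph as the curve $t\mapsto(t,F(t))$ and computing its length as the total variation of the vector measure $(dx,\,dF)=(1,F')\,dx + (0,\,d\mu_s)$. Because $dx$ and $d\mu_s$ are mutually singular, the absolutely continuous and singular parts do not interfere, so the total variation separates into the contribution $\int_0^1\sqrt{1+(F')^2}\,dx$ from the a.c.\ part and the contribution $\mu_s([0,1])=s$ from the singular part, whose horizontal ($dx$) component vanishes. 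This is precisely the content quoted from \cite[Theorem 6.22]{cantor1}.

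Granting the formula, both directions fall out immediately. Since $\sqrt{1+t^2}\le 1+t$ for $t\ge 0$,
\[
L = \int_0^1 \sqrt{1+(F'(x))^2}\,dx + s \le \int_0^1 (1+F'(x))\,dx + s = 1+a+s = 2,
\]
with equality if and only if $\sqrt{1+(F'(x))^2}=1+F'(x)$ for a.e.\ $x$, i.e.\ $F'(x)=0$ a.e. For the implication $(ii)\Rightarrow(i)$, if $F$ is singular then $F'=0$ a.e.\ and $s=1$, so $L=\int_0^1 1\,dx + 1 = 2$. For $(i)\Rightarrow(ii)$, if $L=2$ then the equality case forces $F'=0$ a.e., hence $a=0$, so $F$ is singular.

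I expect the main obstacle to be the justification of the arclength formula itself, specifically isolating the contribution $s$ of the singular part: the naive formula $L=\int_0^1\sqrt{1+(F')^2}\,dx$ holds only for absolutely continuous $F$ and undercounts exactly the singular increase, so this step needs care. If one prefers to avoid vector-measure language, the same formula can be established directly from the definition $L=\sup_P\sum_i\sqrt{(\Delta x_i)^2+(\Delta F_i)^2}$: the bound $L\le 2$ is immediate from $\sqrt{(\Delta x_i)^2+(\Delta F_i)^2}\le \Delta x_i+\Delta F_i$ and telescoping, while in the singular case the matching lower bound comes from choosing partitions that separate the \emph{almost vertical} subintervals carrying the singular mass from the \emph{almost horizontal} subintervals on which $F$ is nearly constant, so that each partition sum approaches $\Delta x_i+\Delta F_i$ termwise. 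This constructive lower bound is the delicate part.
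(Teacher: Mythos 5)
The paper does not actually prove Lemma \ref{lemma:arclength}; it is quoted without proof from \cite[Theorem 6.22]{cantor1}, so your argument is necessarily a different (indeed, the only) route on offer, and it is correct. Your reduction of both implications to the formula $L=\int_0^1\sqrt{1+(F')^2}\,dx+s$ together with the strict inequality $\sqrt{1+t^2}<1+t$ for $t>0$ is clean, and the bookkeeping $a+s=1$, ``singular $\Leftrightarrow a=0$'' is the right use of the Lebesgue decomposition. One remark on where the weight of the proof really sits: the two implications do not need the formula with the same precision. For $(i)\Rightarrow(ii)$ you only need the upper bound $L\le\int_0^1\sqrt{1+(F')^2}\,dx+s$, which is elementary --- split $\Delta F_i$ into its absolutely continuous and singular parts, apply the triangle inequality $\sqrt{(\Delta x_i)^2+(A_i+S_i)^2}\le\sqrt{(\Delta x_i)^2+A_i^2}+S_i$ and then the integral form of the triangle inequality on each subinterval --- and this already yields $L<2$ whenever $F'>0$ on a set of positive measure. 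The genuinely delicate step is only the lower bound $L\ge 2$ in the singular case, which you correctly flag; your partition sketch does close it (take a Borel null set $A$ carrying all of $dF$, cover it by an open $U$ with $\lambda(U)<\epsilon$, select finitely many component intervals capturing $dF$-mass $>1-\epsilon$, and the resulting partition sum exceeds $2-2\epsilon$). So the proposal is complete modulo writing out that covering argument, and it in fact proves slightly more than the stated equivalence, namely the exact value of the arclength for any continuous increasing $F$.
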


\begin{proposition} \label{prop:arclengthbounds}
  Consider a $q$-sided die, where $x_i\in\{0,\ldots,q-1\}$ denotes the outcome of the $i^\text{th}$ toss and $p_j:=\mathbb{P}[x_i=j]$ for $j\in\{0,\ldots,q-1\}$. Given $X = \sum_{i=1}^\infty x_iq^{-i}$, if $F(x)=\mathbb{P}[X\leq x]$ is the cumulative distribution function obtained after tossing the die an infinite number of times, then
  \begin{enumerate}[(i)]
      \item If $p_j = \frac{1}{q}$ for all $j$, then the arclength of $F(x)$ on $[0,1]$ is $\sqrt{2}$. \label{item1:prop:arclengthbounds}
      \item If $p_j\neq \frac{1}{q}$ for some $j$, then the arclength of $F(x)$ on $[0,1]$ is $2$.
  \end{enumerate}
\end{proposition}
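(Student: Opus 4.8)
The plan is to read both parts off the structural results already in hand, letting Lemma~\ref{lemma:arclength} carry the singular case. For part~(\ref{item1:prop:arclengthbounds}) I would invoke Theorem~\ref{maintheorem2}(\ref{maintheorem2item1}), which gives $F(x)=x$ on $[0,1]$ when $p_j=1/q$ for all $j$. The graph $y=F(x)$ is then the straight segment from $(0,0)$ to $(1,1)$, and since $F$ is differentiable with $F'(x)=1$, the arclength integral evaluates at once:
\[
\int_0^1 \sqrt{1 + \bigl(F'(x)\bigr)^2}\,dx = \int_0^1 \sqrt{2}\,dx = \sqrt{2}.
\]

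For part~(ii) the strategy is to apply Lemma~\ref{lemma:arclength}, whose conclusion is precisely that a continuous increasing $F$ with $F(0)=0$ and $F(1)=1$ has arclength $2$ exactly when it is singular. The singularity is free: when $p_j \neq 1/q$ for some $j$, Theorem~\ref{maintheorem2}(\ref{maintheorem2item2}) already tells us $F$ is singular, so the only work is verifying the remaining hypotheses of the lemma. Continuity comes from Theorem~\ref{maintheorem1}(i). For the endpoints I would note that $X=\sum_i x_i q^{-i}$ satisfies $0\le X\le \sum_{i=1}^\infty (q-1)q^{-i}=1$ deterministically, so $F(1)=\mathbb{P}[X\le 1]=1$ automatically, while $F(0)=\mathbb{P}[X\le 0]=\mathbb{P}[X=0]=0$ because $X=0$ forces every digit to vanish, an event of probability $\lim_n p_0^{\,n}=0$ by \eqref{zerolimit} (using $p_0<1$, the standing assumption that no $p_j=1$). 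Monotonicity holds since $F$ is a distribution function. Lemma~\ref{lemma:arclength} then delivers arclength $2$.

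The step I would be most careful about — and the only real obstacle — is the monotonicity hypothesis of Lemma~\ref{lemma:arclength}, which is phrased for an \emph{increasing} function. By Theorem~\ref{maintheorem1}(ii), $F$ is strictly increasing only when every $p_j>0$; if some $p_j=0$ (as in the Cantor case of Remark~\ref{cantor}), $F$ is merely non-decreasing and is flat on entire base-$q$ subintervals. I would resolve this by either confirming that the source \cite{cantor1} reads ``increasing'' as non-decreasing, so the lemma applies verbatim, or by observing that arclength is insensitive to the flat portions: on any maximal interval where $F$ is constant the graph is horizontal and contributes only its horizontal extent, so the total-variation computation underlying Lemma~\ref{lemma:arclength} — horizontal extent $1$ plus vertical extent $1$ for a singular function whose rise is concentrated on a Lebesgue-null set — goes through unchanged for non-decreasing continuous $F$. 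Either way the dichotomy (singular $\Leftrightarrow$ arclength $2$) survives, and since case~(\ref{item1:prop:arclengthbounds}) gives the non-singular value $\sqrt{2}\neq 2$, the two parts are mutually consistent with the lemma, completing the proof.
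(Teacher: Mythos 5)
Your proof is correct and follows essentially the same route as the paper's: part (i) reads $F(x)=x$ off Theorem \ref{maintheorem2} and computes arclength $\sqrt{2}$ directly, and part (ii) combines singularity from Theorem \ref{maintheorem2}, continuity from Theorem \ref{maintheorem1}, and the endpoint values $F(0)=0$, $F(1)=1$ to invoke Lemma \ref{lemma:arclength}. Your additional attention to the monotonicity hypothesis of Lemma \ref{lemma:arclength} when some $p_j=0$ (so $F$ is only non-decreasing, as in the Cantor case) is a genuine point of care that the paper's proof passes over silently, and your resolution of it is sound.
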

\begin{proof}
    By Theorem \ref{maintheorem2}, if $p_j = \frac{1}{q}$ for all $j$, then $F(x)=x$ on $[0,1]$ and therefore its length is $\sqrt{2}$. If, on the other hand, we have that $p_j\neq \frac{1}{q}$ for some $j$, then Theorem \ref{maintheorem2} guarantees that $F(x)$ is singular. Moreover, by Theorem \ref{maintheorem1}, we know that $F$ is both continuous and increasing on $[0,1]$. Finally, we observe that $F(0)=0$ and $F(1)=1$ (this can be seen, for example, by using the recursion formula in Theorem \ref{maintheorem2}). Thus, by Lemma \ref{lemma:arclength}, the arclength of $F$ on $[0,1]$ is equal to 2.
\end{proof}

This theorem tells us the arclength for the cumulative distribution function after we roll our die infinitely often. If, however, we are given a die and roll it $n$ times, we can ask: Are we getting closer to a fair distribution, or an unfair one? One way to answer this is to look at the $n^{\text{th}}$ iterate of our recursion formula. That is, in the language of Theorem \ref{theorem: uniformcomp}, we consider $f_n = T^nx$ and look at its arclength as $n$ gets larger.

We fix some notation first. Given our $q$-sided die, where $x_i\in\{0,\ldots,q-1\}$ denotes the outcome of the $i^\text{th}$ toss and $p_j:=\mathbb{P}[x_i=j]$ for $j\in\{0,\ldots,q-1\}$, we will put $P = \{p_j\}_{j=0}^{q-1}$ and denote the set of its $n$-tuples by $P^n$. Order this finite set lexicographically and put $P^n =\{v_\ell\}_{\ell=0}^{q^n-1}$ so that, for example, $v_0 = (p_0,p_0,\ldots,p_0)$, $v_1=(p_0,p_1,p_0,\ldots,p_0)$, etc. Let $\Pi_n\colon P^n\to \mathbb{R}$ be the mapping that multiplies the coordinates of a given tuple from $P^n$. For example, if $v_\ell = (p_0,p_1,p_0,p_2,\ldots,p_3)$, then $\Pi_n(v_\ell) = p_0\cdot p_1\cdot p_0\cdot p_2\cdot \ldots \cdot p_3$.

\begin{remark} \label{remark:tags}
    The  tuple  of  probabilities associated with a  given  $v_\ell$,  say, $(p_{\ell_0},\ldots,p_{\ell_{q-1}})$ provide a \textit{unique} `tag' by which we can locate the $q^n$ base-$q$ intervals of rank $n$. For example, if we let $q=4$ so that our probabilities are $p_0,p_1,p_2$, and $p_3$, and we consider the base-$4$ intervals of rank $n=2$, then 
    \[
    P^2 = \{(p_i,p_j) \ : \ i,j\in\{0,1,2,3\}\}.
    \]
    Now, the interval $\left[ \frac{14}{4^2}, \frac{15}{4^2}\right]$ is uniquely associated with the tuple $v_{14}=(p_3,p_2)$ in the following way: Start with $[0,1]$, zoom in on the fourth subinterval $\left[ \frac{3}{4}, 1\right]$ and further zoom in on \textit{its} third subinterval to yield $\left[ \frac{14}{4^2}, \frac{15}{4^2}\right]$. Note that order matters. For example, the tuple $v_7=(p_2,p_3)$ corresponds to the interval $\left[ \frac{7}{4^2}, \frac{8}{4^2}\right]$. In either case, $\Pi_2((p_3,p_2))=\Pi_2((p_2,p_3)) = p_3p_2$.
\end{remark}

\begin{theorem} \label{theorem:arclength}
     Consider a $q$-sided die, where $x_i\in\{0,\ldots,q-1\}$ denotes the outcome of the $i^\text{th}$ toss and $p_j:=\mathbb{P}[x_i=j]$ for $j\in\{0,\ldots,q-1\}$. Let $f_n = T^nx$ be the piece-wise linear function described in Theorem \ref{theorem: uniformcomp} and let $P^n =\{v_i\}_{i=0}^{q^n-1}$ be the set of $q$-tuples described in the preceding conversation. Then
     \[
        \text{Arclength of} \ f_n = \sum_{i=0}^{q^n-1} \sqrt{ \left( \frac{1}{q^n} \right)^2 +  (\Pi_n(v_{i}))^2 }.
     \]
\end{theorem}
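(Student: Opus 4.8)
The plan is to reduce the arclength computation to a clean structural description of the iterate $f_n = T^n x$: namely, that $f_n$ is a continuous piecewise-linear function whose graph is a union of exactly $q^n$ line segments, one sitting above each base-$q$ interval of rank $n$, where the segment above the interval tagged by $v_i$ (in the sense of Remark \ref{remark:tags}) has horizontal run $1/q^n$ and vertical rise exactly $\Pi_n(v_i)$. Granting this, the result is immediate: a line segment with run $1/q^n$ and rise $\Pi_n(v_i)$ has length $\sqrt{(1/q^n)^2 + (\Pi_n(v_i))^2}$, and since $f_n$ is continuous the segments abut without gap or overlap, so the arclength of $f_n$ is the sum of the segment lengths, which is precisely the claimed expression. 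Continuity of each $f_n$ is automatic because $f_0 = x \in C[0,1]$ and $T$ maps $C[0,1]$ into itself.

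I would prove the structural description by induction on $n$, exploiting the self-affine nature of $T$. Reading off \eqref{seqdef}, the restriction of $Tf$ to the $k$-th rank-1 interval $\left[\frac{k}{q}, \frac{k+1}{q}\right]$ is $\left(\sum_{j<k} p_j\right) + p_k\, f(qx-k)$; that is, it is a copy of the entire graph of $f$ on $[0,1]$, compressed horizontally by $1/q$, scaled vertically by $p_k$, and shifted upward. For the base case $n=1$, evaluating \eqref{seqdef} directly shows $f_1$ is linear on each $\left[\frac{k}{q}, \frac{k+1}{q}\right]$ with run $1/q$ and rise $p_k = \Pi_1(v_k)$. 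For the inductive step, I apply this self-affine identity with $f = f_n$: on the $k$-th rank-1 interval, $f_{n+1}$ is a horizontally-compressed, vertically-$p_k$-scaled copy of $f_n$, so by the inductive hypothesis it is piecewise linear over the rank-$(n+1)$ intervals nested inside that block, and a rank-$n$ segment of $f_n$ tagged $(p_{\ell_1},\ldots,p_{\ell_n})$, of rise $p_{\ell_1}\cdots p_{\ell_n}$, becomes a rank-$(n+1)$ segment of rise $p_k\, p_{\ell_1}\cdots p_{\ell_n}$ carrying the tag $(p_k, p_{\ell_1},\ldots,p_{\ell_n})$. Since $\Pi_{n+1}(p_k, p_{\ell_1},\ldots,p_{\ell_n}) = p_k\, p_{\ell_1}\cdots p_{\ell_n}$, the rise matches the product of the new tag, and this digit-prepending on tags is exactly the nesting relabeling made explicit in Remark \ref{remark:tags}. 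This closes the induction.

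The only real difficulty is bookkeeping: one must check that the self-affine copying performed by $T$ carries the rank-$n$ tagging onto the rank-$(n+1)$ tagging consistently with the lexicographic indexing of $P^n$. This is harmless for the final formula, however, because the sum ranges over the entire index set $i = 0, \ldots, q^n-1$ and hence depends only on the multiset $\{\Pi_n(v_i)\}$, not on the order in which the tuples are listed; the lexicographic ordering merely assigns names to the segments. As a consistency check, one may note that the rise of $f_n$ over $\left[\frac{k}{q^n}, \frac{k+1}{q^n}\right]$ agrees with the increment of the limit $F$ over the same interval computed in \eqref{eq:interval}, since $f_n = T^n x$ and $F = T^n F$ agree at all rank-$n$ grid points (another short induction, using that $f_0 = x$ and $F$ agree at $0$ and $1$). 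Thus no estimate or cancellation is needed; the entire content is the self-similar propagation of the tags.
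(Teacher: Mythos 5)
Your proposal is correct and follows the same overall strategy as the paper: decompose the graph of $f_n$ into $q^n$ line segments over the base-$q$ intervals of rank $n$, show the segment tagged $v_i$ has run $1/q^n$ and rise $\Pi_n(v_i)$, and sum the segment lengths. The one place you diverge is in how the rise is computed: the paper passes through the limit function $F$, asserting that $f_n$ and $F$ agree at all rank-$n$ grid points and then unwinding the recursion \eqref{qsideddierecursion} for $F$ level by level, whereas you induct directly on the iterates $f_n$ using the self-affine action of $T$ on each rank-1 block, never invoking $F$. The computation is the same peeling-off of one factor $p_k$ per level, but your version is slightly more self-contained, since the paper's grid-point agreement claim is stated without proof while your base case and inductive step are both immediate from \eqref{seqdef}; the paper's version, in exchange, makes explicit the connection between the finite iterates and the increments of $F$ computed in \eqref{eq:interval}, which you relegate to a consistency check.
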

\begin{proof}
    The function $f_n$ is piece-wise linear on the base-$q$ intervals of rank $n$: 
 \begin{equation} \label{eq:ranknints}
         \left[0, \frac{1}{q^n}\right], \left[\frac{1}{q^n}, \frac{2}{q^n}\right],\ldots, \left[ \frac{q^n-1}{q^n},1\right].
 \end{equation}
    As such, the arclength of $f_n$ is equal to the sum of the length of the linear components on each of these intervals. Let $[\textstyle \frac{i}{q^n}, \frac{i+1}{q^n}]$ be an arbitrary such interval and consider the points $f_n(\frac{i}{q^n})$ and $f_n(\frac{i+1}{q^n})$.

    Note that, if $F$ is the full cumulative distribution function, then $F$ and $f_n$ agree on the endpoints of every base-$q$ interval of rank $n$. (In fact, this is true for any rank of base-$q$ intervals.) Thus, we can use the recursive definition for $F$ given in Theorem \ref{maintheorem2} to evaluate the endpoints $f_n(\frac{i}{q^n})$ and $f_n(\frac{i+1}{q^n})$. To see this, note that the points $\frac{i}{q^n}$ and $\frac{i+1}{q^n}$ must both live in some base-$q$ interval of rank $1$. That is, they must both live in one of $[0, \textstyle \frac{1}{q}], [\textstyle \frac{1}{q}, \frac{2}{q}],\ldots, [\textstyle \frac{q-1}{q},1]$. (Here we are allowing the possibility that one of these points is the endpoint of an interval). Suppose the two points live in the interval $[\textstyle \frac{k}{q}, \frac{k+1}{q}]$ for some $k$. Then
\begin{align*}
   f_n\left(\frac{i}{q^n}\right) = F\left(\frac{i}{q^n}\right) &= (p_0+p_1+\ldots+p_{k-1})+p_{k}F\left(q\cdot \frac{i}{q^n} - k \right) \\
   &= (p_0+p_1+\ldots+p_{k-1})+p_{k}F\left(\frac{i - kq^{n-1}}{q^{n-1}}\right)
\end{align*}
and
\begin{align*}
   f_n\left(\frac{i+1}{q^n}\right) = F\left(\frac{i+1}{q^n}\right) &= (p_0+p_1+\ldots+p_{k-1})+p_{k}F\left(q\cdot \frac{i+1}{q^n} - k \right) \\
   &=(p_0+p_1+\ldots+p_{k-1})+p_{k}F\left(\frac{(i - kq^{n-1})+1}{q^{n-1}}\right)
\end{align*}
so that 
   \begin{equation} \label{eq:functionheight}
f_n\left(\frac{i+1}{q^n}\right) - f_n\left(\frac{i}{q^n}\right) = p_k\left( F\left(\frac{(i - kq^{n-1})+1}{q^{n-1}}\right) - F\left(\frac{i - kq^{n-1}}{q^{n-1}}\right) \right)
    \end{equation}
Notably, we are now looking at the endpoints of the base-$q$ interval of rank $n-1$ given by $\left[ \textstyle \frac{i - kq^{n-1}}{q^{n-1}}, \frac{(i - kq^{n-1})+1}{q^{n-1}} \right]$. Thus, (\ref{eq:functionheight}) shows that when we run the difference $f_n\left(\frac{i+1}{q^n}\right) - f_n\left(\frac{i}{q^n}\right)$ through an iteration of the recursive formula for $F$, it returns the probability $p_k$ (where $k$ is the rank-1 interval that the rank-$n$ interval $\left[ \textstyle \frac{i}{q^n},\frac{i+1}{q^n}\right]$ landed in) times another difference of $F$-function evaluations at the endpoints of a rank $n-1$ interval. 

Repeating this process $n$ times, we get that
\begin{equation*}
    f_n\left(\frac{i+1}{q^n}\right) - f_n\left(\frac{i}{q^n}\right) = \Pi_n(v_{i})(F(1)-F(0)) = \Pi_n(v_{i})(1-0) = \Pi_n(v_{i}),
\end{equation*}
where, in light of Remark \ref{remark:tags}, $v_{i}$ is exactly the unique $q$-tuple `tag' for the interval $\left[ \textstyle \frac{i}{q^n},\frac{i+1}{q^n}\right]$.
    
    Since $f_n$ is piece-wise linear on the base-$q$ intervals of rank $n$, its arclength over $\left[ \textstyle \frac{i}{q^n},\frac{i+1}{q^n}\right]$ is equal to
    \[
\sqrt{\left( \frac{i+1}{q^n} - \frac{i}{q^n} \right)^2 + \left(f_n\left(\frac{i+1}{q^n}\right) - f_n\left(\frac{i}{q^n}\right)\right)^2} = \sqrt{\left( \frac{1}{q^n}\right)^2 + \left(\Pi_n(v_{i})\right)^2}
    \]
    Computing this for each of interval in (\ref{eq:ranknints}) gives the total arclength as
         \[
        \text{Arclength of} \ f_n = \sum_{i=0}^{q^n-1} \sqrt{ \left( \frac{1}{q^n} \right)^2 +  (\Pi_n(v_{i}))^2 }.
     \]
\end{proof}

Observe that, indeed, if $p_j=\textstyle\frac{1}{q}$ for all $j$, then $\Pi_n(v_i)=\textstyle \frac{1}{q^n}$ for all $v_i\in P^n$ so that
\begin{align*}
\text{Arclength of} \ F \ \text{on} \ [0,1]  &= \lim_{n\to \infty} \sum_{i=0}^{q^n-1} \sqrt{ \left( \frac{1}{q^n} \right)^2 +  (\Pi_n(v_{i}))^2 }\\
&= \lim_{n\to \infty} \sum_{i=0}^{q^n-1} \sqrt{ \left( \frac{1}{q^n} \right)^2 +  \left( \frac{1}{q^n} \right)^2 }\\
&= \sqrt{2},
\end{align*}
Thus, this formula now provides an alternate way to recover (\ref{item1:prop:arclengthbounds}) from Proposition \ref{prop:arclengthbounds}. We also have the following corollary:
\begin{corollary} \label{corr:unfairarc}
      If $p_j\neq \textstyle \frac{1}{q}$ for some $j$, then 
      \[
\text{Arclength of} \ F \ \text{on} \ [0,1] = \lim_{n\to\infty} \sum_{i=0}^{q^n-1} \sqrt{ \left( \frac{1}{q^n} \right)^2 +  (\Pi_n(v_{i}))^2 } = 2.
      \]
      \end{corollary}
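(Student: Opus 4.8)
The plan is to read the arclength of $f_n$ directly off Theorem \ref{theorem:arclength}, bound the resulting sum cleanly from above, and then identify its limit with the already-known arclength of $F$.

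First I would establish the upper bound. Writing $L_n := \sum_{i=0}^{q^n-1}\sqrt{(1/q^n)^2 + (\Pi_n(v_i))^2}$ for the arclength of $f_n$, the elementary inequality $\sqrt{a^2+b^2}\le a+b$ for $a,b\ge 0$ gives
\[
L_n \le \sum_{i=0}^{q^n-1}\left(\frac{1}{q^n} + \Pi_n(v_i)\right) = 1 + \sum_{i=0}^{q^n-1}\Pi_n(v_i).
\]
The last sum is exactly $\bigl(\sum_{j=0}^{q-1}p_j\bigr)^n = 1$, since as $v_i$ ranges over $P^n$ the values $\Pi_n(v_i)$ run over all $q^n$ products of $n$ of the $p_j$'s. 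Hence $L_n \le 2$ for every $n$, and in particular $\limsup_n L_n \le 2$.

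The lower bound is where the real content lies. The key observation is that $f_n$ agrees with $F$ at the endpoints $i/q^n$ of every rank-$n$ base-$q$ interval and is linear in between (both facts are recorded in the proof of Theorem \ref{theorem:arclength}), so the graph of $f_n$ is precisely the inscribed polygon of the graph of $F$ over the partition $\{i/q^n\}_{i=0}^{q^n}$. Thus $L_n$ is the length of that inscribed polygon, and these partitions have mesh $1/q^n \to 0$. Invoking the standard fact that for a continuous curve the inscribed polygonal lengths converge to the arclength of the curve as the mesh tends to $0$, I conclude $\lim_n L_n$ equals the arclength of $F$ on $[0,1]$. Since $p_j \neq 1/q$ for some $j$, Proposition \ref{prop:arclengthbounds} identifies this arclength as $2$ (via singularity of $F$ from Theorem \ref{maintheorem2} together with Lemma \ref{lemma:arclength}). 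Combining with the upper bound yields $\lim_n L_n = 2$.

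I expect the main obstacle to be making the lower bound rigorous, i.e.\ ruling out that the limit of the explicit sum falls short of $2$; a priori it could converge to anything in $[\sqrt 2, 2]$. The cleanest resolution is the polygonal-approximation argument above, which piggybacks on the arclength of $F$ already established in Proposition \ref{prop:arclengthbounds} and so avoids a direct attack on the sum. Should one instead want a self-contained estimate, one could use $(a+b)-\sqrt{a^2+b^2}\le 2\min(a,b)$ to get $2 - L_n \le 2\sum_i \min(1/q^n, \Pi_n(v_i))$ and then argue that this overlap tends to $0$; but proving that is tantamount to re-deriving the mutual singularity of $\mu$ and $\lambda$ through Lemmas \ref{lemma:blln} and \ref{lemma:bnnt}, so leaning on Proposition \ref{prop:arclengthbounds} is the more economical route.
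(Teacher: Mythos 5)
Your proof is correct and follows essentially the same route as the paper: Proposition \ref{prop:arclengthbounds} supplies the value $2$ for the arclength of $F$, and Theorem \ref{theorem:arclength} identifies $L_n$ as the length of the polygon inscribed in the graph of $F$ over the partition $\{i/q^n\}_{i=0}^{q^n}$, whose limit as the mesh tends to $0$ is that arclength. Your write-up is in fact slightly more complete than the paper's one-line proof, since you make explicit both the polygonal-approximation step and the a priori bound $L_n\le 2$ via $\sum_i \Pi_n(v_i)=\bigl(\sum_j p_j\bigr)^n=1$, which the paper leaves implicit.
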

      \begin{proof}
          The fact that $p_j\neq \textstyle \frac{1}{q}$ implies, by Proposition \ref{prop:arclengthbounds}, that the arclength of $F$ is 2. Using the formula in Theorem \ref{theorem:arclength} gives the result.
      \end{proof}
      In general, the authors know of no way to compute this limit directly. This might be interesting due to the fact that result can be rephrased as 
         \[
\lim_{n\to\infty} \left(\underset{v_i\in P^n}{\text{Average}}\left\{\sqrt{ 1 +  (q^n\Pi_n(v_i))^2 }\right\}\right) = 2,
 \]
 where we van view $\log(p_i)$ as being the weights on a complete $q$-ary tree and the mapping $\Pi_n$ as computing length of the paths (in a graph-theoretic sense) through the tree. The expression above is then taking a limit of the average distance, over all paths in the tree, between 1 and the perturbation-from-fair that a particular path yields.

 \begin{remark}
     In light of Remark \ref{cantor}, we know that the Cantor function is obtained when $q=3$ and our probabilities are chosen so that $p_0=p_2=0.5$ and $p_1=0$. Proposition \ref{prop:arclengthbounds} therefore guarantees that its arclength on $[0,1]$ is equal to 2 and hence, we know that the limit considered in Corollary \ref{corr:unfairarc} is equal to 2. Interestingly, this is one of the few instances that one \textit{can} compute this limit directly. See \cite[pg.4]{cantor2} for details.
 \end{remark}

\section{Discussion \label{sec:conclusion}}
Motivated by the need to understand probabilistic computing devices and their inherent randomness, our paper aimed to investigate the distribution function associated with rolling a (possibly unfair) $q$-sided die. While the literature covers the $q=2$ coin flip case extensively, the full $q$-sided die case had not yet been addressed. Theorems \ref{maintheorem1} and \ref{maintheorem2} helped answer Conjecture  \ref{conj:billingsleydice}, while Theorems \ref{maintheorem2} and \ref{maintheorem3} addressed recent `folk lore' claims (reproduced here as claims (\ref{folklore1}) and (\ref{folklore2}).) In the spirit of these results, Theorem \ref{thm:noniidcoinlips} provided a novel analogous result to \cite[Example 31.1]{billingsley} for independent, but not identically distributed, coin flip sequences.

Adding to these investigations, we have provided two theoretical tools to compare an unfair distribution, $F(x)$, to a fair one, $y=x$. Both use the iterative construction of the distribution given in (\ref{seqdef}). Theorem \ref{theorem: uniformcomp} in Section \ref{supcompare} provided an upper bound on $\|x-F(x)\|_\infty$, and Theorem \ref{theorem:arclength} in Section \ref{comparearclength} provided a formula for calculating the arclength of the $F(x)$ after finitely many dice rolls.

\subsection{Future Mathematical Work}
In this paper, we investigated the analytic properties of distributions associated with unfair dice. We looked at a single die in Theorems \ref{maintheorem1} and \ref{maintheorem2} and a pair of same-sided dice in Theorem \ref{maintheorem3}. It is reasonable to ask about analytic comparisons between pairs of dice with differing number of sides. 

Consider a $q$-sided and $\widetilde{q}$-sided pair of dice. Let $x_i\in\{0,\ldots,q-1\}$ denote the outcome of the $i^\text{th}$ toss of the $q$-sided die with corresponding probabilities $p_j:=\mathbb{P}[x_i=j]$ for $j\in\{0,\ldots,q-1\}$, and let $\widetilde{x}_k$ and $\widetilde{p}_k$ be defined similarly for the second die (with $k\in\{0,\ldots,\widetilde{q}-1\})$. Given $X = \sum_{i=1}^\infty x_iq^{-i}$ and $\widetilde{X} = \sum_{i=1}^\infty \widetilde{x}_i\widetilde{q}^{-i}$, put $F(x)=\mathbb{P}[X\leq x]$ and $\widetilde{F}(x)=\mathbb{P}[\widetilde{X}\leq x]$ as the respective cumulative distribution functions obtained after tossing the corresponding die an infinite number of times. Let $\mu$ and $\widetilde{\mu}$ be the associated probability measures, respectively.

\begin{question}
 If $q\neq \widetilde{q}$, then what can be said of $\mu$ and $\widetilde{\mu}$?   
\end{question}

As far as we can tell, this situation is more nuanced. It seems possible to show that when $q>\widetilde{q}$, and the $q$ die has only $\widetilde{q}$ possible outcomes (with the $q-\widetilde{q}$ outcomes having zero probability), the associated measures are mutually singular. However, when $q$ and $\widetilde{q}$ are not relatively prime (for example, when $q=2$ and $\widetilde{q}=4$), it may be the case that, for some specific choices of probabilities, the support of $\mu$ and $\widetilde{\mu}$ could be the same. It is further unclear what happens when $q$ and $\widetilde{q}$ \textit{are} taken to be relatively prime. Tackling this generalization would require a more thorough understanding of how the measures $\mu$ and $\widetilde{\mu}$ interact with base expansions different from their own.

In Corollary \ref{corr:unfairarc}, we showed that when $p_j\neq \textstyle \frac{1}{q}$ for some $j$,
      \[
\text{Arclength of} \ F \ \text{on} \ [0,1] = \lim_{n\to\infty} \sum_{i=0}^{q^n-1} \sqrt{ \left( \frac{1}{q^n} \right)^2 +  (\Pi_n(v_{i}))^2 } = 2.
      \]
We proved this by showing the distribution is singular. Ideally, we would like a direct proof of this fact to facilitate a deeper understanding of the iteration approach given by the recursive formula in (\ref{seqdef}). Specifically, we would like to rigorously quantify how much additional arclength is witnessed every time we iterate.

\subsection{Future Computational Work}

The largely theoretical results shown inform us about how distributions should look when unfair coins or dice are thrown.  Of immediate computational interest is the formula for arclength after $n$ tosses given in Theorem \ref{theorem:arclength}.  Given precisely tuned devices returning weighted dice or coin flips results, one could feasibly form a sort of hypothesis test on how many flips one must take before noticing a deviation from a uniform distribution.  Verifying the arclength formula in device simulation and devising such a test is the subject of ongoing work.

Beyond the fast application of the comparison metrics, the form of the singular measures themselves provides an inspiration point for future microelectronic design and verification.  In our proof of the folk theorem, Theorem \ref{maintheorem2}, an argument is made based on a set of full measure for one weighting being a set of zero measure for all other weightings.  This result then provides a basis for verifying the distribution of an array of $p$-weighted coin-like devices. If one had the probability measure induced by each device and had the set of all binary numbers with density $p$, then the measure of that set determines if the device is correctly weighted. Obviously, such a distributional object does not exist.  However, this theory touchpoint can guide the discovery of future approximate methods and heuristics.

Future work will see us put these comparative tools into practice via simulation. We will analyze the resulting data, discuss their merits and difficulties, and offer additional refinements to their implementation.

\section*{Acknowledgements}
This article has been authored by an employee of National Technology \& Engineering Solutions of Sandia, LLC under Contract No. DE-NA0003525 with the U.S. Department of Energy (DOE). The employee owns all right, title and interest in and to the article and is solely responsible for its contents. The United States Government retains and the publisher, by accepting the article for publication, acknowledges that the United States Government retains a non-exclusive, paid-up, irrevocable, world-wide license to publish or reproduce the published form of this article or allow others to do so, for United States Government purposes. The DOE will provide public access to these results of federally sponsored research in accordance with the DOE Public Access Plan \url{https://www.energy.gov/downloads/doe-public-access-plan}.  The  authors  acknowledge  support  from  the  DOE  Office  of  Science  (ASCR/BES)   Microelectronics   Co-Design   project   COINFLIPS.
\bibliographystyle{alpha}
\bibliography{flip}

\newcommand{\etalchar}[1]{$^{#1}$}
\begin{thebibliography}{CHM{\etalchar{+}}22b}

\bibitem[AGC{\etalchar{+}}22]{aadit2022massively}
Navid~Anjum Aadit, Andrea Grimaldi, Mario Carpentieri, Luke Theogarajan, John~M
  Martinis, Giovanni Finocchio, and Kerem~Y Camsari.
\newblock Massively parallel probabilistic computing with sparse ising
  machines.
\newblock {\em Nature Electronics}, 5(7):460--468, 2022.

\bibitem[Aim21]{aimone2021roadmap}
James~B Aimone.
\newblock A roadmap for reaching the potential of brain-derived computing.
\newblock {\em Advanced Intelligent Systems}, 3(1):2000191, 2021.

\bibitem[BGBR{\etalchar{+}}17]{bernardo2017extracting}
Ram{\'o}n Bernardo-Gavito, Ibrahim~Ethem Bagci, Jonathan Roberts, James Sexton,
  Benjamin Astbury, Hamzah Shokeir, Thomas McGrath, Yasir~J Noori,
  Christopher~S Woodhead, Mohamed Missous, et~al.
\newblock Extracting random numbers from quantum tunnelling through a single
  diode.
\newblock {\em Scientific Reports}, 7(1):17879, 2017.

\bibitem[Bil65]{billingsleyergodic}
Patrick Billingsley.
\newblock {\em Ergodic theory and information}.
\newblock John Wiley \& Sons, Inc., New York-London-Sydney, 1965.

\bibitem[Bil12]{billingsley}
Patrick Billingsley.
\newblock {\em Probability and measure}.
\newblock Wiley Series in Probability and Statistics. John Wiley \& Sons, Inc.,
  Hoboken, NJ, 2012.

\bibitem[Bor09]{mileBorelLesPD}
M.~{\'E}mile Borel.
\newblock Les probabilit{\'e}s d{\'e}nombrables et leurs applications
  arithm{\'e}tiques.
\newblock {\em Rendiconti del Circolo Matematico di Palermo (1884-1940)},
  27:247--271, 1909.

\bibitem[CCD23]{chowdhury2023accelerated}
Shuvro Chowdhury, Kerem~Y Camsari, and Supriyo Datta.
\newblock Accelerated quantum monte carlo with probabilistic computers.
\newblock {\em Communications Physics}, 6(1):85, 2023.

\bibitem[CFSD17]{camsari2017stochastic}
Kerem~Yunus Camsari, Rafatul Faria, Brian~M Sutton, and Supriyo Datta.
\newblock Stochastic p-bits for invertible logic.
\newblock {\em Physical Review X}, 7(3):031014, 2017.

\bibitem[CGA{\etalchar{+}}23]{chowdhury2023full}
Shuvro Chowdhury, Andrea Grimaldi, Navid~Anjum Aadit, Shaila Niazi, Masoud
  Mohseni, Shun Kanai, Hideo Ohno, Shunsuke Fukami, Luke Theogarajan, Giovanni
  Finocchio, et~al.
\newblock A full-stack view of probabilistic computing with p-bits: devices,
  architectures and algorithms.
\newblock {\em IEEE Journal on Exploratory Solid-State Computational Devices
  and Circuits}, 2023.

\bibitem[CHM{\etalchar{+}}22a]{cornean2022singular}
Horia Cornean, Ira~W. Herbst, Jesper Møller, Benjamin~B. Støttrup, and
  Kasper~S. Sørensen.
\newblock Singular distribution functions for random variables with stationary
  digits, 2022.

\bibitem[CHM{\etalchar{+}}22b]{cornean2022characterization}
Horia~D. Cornean, Ira~W. Herbst, Jesper Møller, Kasper~S. Sørensen, and
  Benjamin~B. Støttrup.
\newblock Characterization of random variables with stationary digits.
\newblock {\em Journal of Applied Probability}, 59(4):931–947, 2022.

\bibitem[Dar72]{cantor2}
R.~B. Darst.
\newblock Some cantor sets and cantor functions.
\newblock {\em Mathematics Magazine}, 45(1):2--7, 1972.

\bibitem[DMRV06]{cantor1}
O.~Dovgoshey, O.~Martio, V.~Ryazanov, and M.~Vuorinen.
\newblock The {C}antor function.
\newblock {\em Expo. Math.}, 24(1):1--37, 2006.

\bibitem[Dob96]{Dobos}
Jozef Dobo\v{s}.
\newblock The standard {C}antor function is subadditive.
\newblock {\em Proc. Amer. Math. Soc.}, 124(11):3425--3426, 1996.

\bibitem[FH12]{flegal2012exact}
James~M Flegal and Radu Herbei.
\newblock Exact sampling for intractable probability distributions via a
  bernoulli factory.
\newblock 2012.

\bibitem[Gry21]{gryszka2021biased}
Karol Gryszka.
\newblock From biased coin to any discrete distribution.
\newblock {\em Periodica Mathematica Hungarica}, 83:71--80, 2021.

\bibitem[KD21]{kaiser2021probabilistic}
Jan Kaiser and Supriyo Datta.
\newblock Probabilistic computing with p-bits.
\newblock {\em Applied Physics Letters}, 119(15):150503, 2021.

\bibitem[KN12]{kuipers2012uniform}
L.~Kuipers and H.~Niederreiter.
\newblock {\em Uniform Distribution of Sequences}.
\newblock Dover Books on Mathematics. Dover Publications, 2012.

\bibitem[LKB{\etalchar{+}}22]{liu2022random}
Samuel Liu, Jaesuk Kwon, Paul~W Bessler, Suma~G Cardwell, Catherine Schuman,
  J~Darby Smith, James~B Aimone, Shashank Misra, and Jean Anne~C Incorvia.
\newblock Random bitstream generation using voltage-controlled magnetic
  anisotropy and spin orbit torque magnetic tunnel junctions.
\newblock {\em IEEE Journal on Exploratory Solid-State Computational Devices
  and Circuits}, 8(2):194--202, 2022.

\bibitem[LLA{\etalchar{+}}22]{leonard2022shape}
Thomas Leonard, Samuel Liu, Mahshid Alamdar, Harrison Jin, Can Cui,
  Otitoaleke~G Akinola, Lin Xue, T~Patrick Xiao, Joseph~S Friedman, Matthew~J
  Marinella, et~al.
\newblock Shape-dependent multi-weight magnetic artificial synapses for
  neuromorphic computing.
\newblock {\em Advanced Electronic Materials}, 8(12):2200563, 2022.

\bibitem[LLJI23]{leonard2023stochastic}
Thomas Leonard, Samuel Liu, Harrison Jin, and Jean Anne~C Incorvia.
\newblock Stochastic domain wall-magnetic tunnel junction artificial neurons
  for noise-resilient spiking neural networks.
\newblock {\em Applied Physics Letters}, 122, 2023.

\bibitem[Mar14]{markowsky2014sad}
George Markowsky.
\newblock The sad history of random bits.
\newblock {\em Journal of Cyber Security and Mobility}, pages 1--26, 2014.

\bibitem[MBC{\etalchar{+}}22]{misra2022probabilistic}
Shashank Misra, Leslie~C Bland, Suma~G Cardwell, Jean Anne~C Incorvia, Conrad~D
  James, Andrew~D Kent, Catherine~D Schuman, J~Darby Smith, and James~B Aimone.
\newblock Probabilistic neural computing with stochastic devices.
\newblock {\em Advanced Materials}, 2022.

\bibitem[RCS{\etalchar{+}}22]{rehm2022stochastic}
Laura Rehm, Corrado Carlo~Maria Capriata, Misra Shashank, J~Darby Smith,
  Mustafa Pinarbasi, B~Gunnar Malm, and Andrew~D Kent.
\newblock Stochastic magnetic actuated random transducer devices based on
  perpendicular magnetic tunnel junctions.
\newblock {\em arXiv preprint arXiv:2209.01480}, 2022.

\bibitem[SKP{\etalchar{+}}22]{schuman2022opportunities}
Catherine~D Schuman, Shruti~R Kulkarni, Maryam Parsa, J~Parker Mitchell,
  Prasanna Date, and Bill Kay.
\newblock Opportunities for neuromorphic computing algorithms and applications.
\newblock {\em Nature Computational Science}, 2(1):10--19, 2022.

\bibitem[TA23]{theilman2023goemans}
Bradley~H Theilman and James~B Aimone.
\newblock Goemans-williamson maxcut approximation algorithm on loihi.
\newblock In {\em Proceedings of the 2023 Annual Neuro-Inspired Computational
  Elements Conference}, pages 1--5, 2023.

\bibitem[TWP{\etalchar{+}}22]{theilman2022stochastic}
Bradley~H Theilman, Yipu Wang, Ojas~D Parekh, William Severa, J~Darby Smith,
  and James~B Aimone.
\newblock Stochastic neuromorphic circuits for solving maxcut.
\newblock {\em arXiv preprint arXiv:2210.02588}, 2022.

\bibitem[Vel21]{pika}
Issy van~der Velde.
\newblock Pok\'emon rng manipulation explained.
\newblock \url{https://www.thegamer.com/pokemon-rng-manipulation-explained},
  2021.
\newblock Accessed: 2023-07-23.

\bibitem[Wen91]{2011aaf1-ab6b-3b14-bb2c-7b5bffa2d318}
Liu Wen.
\newblock An analytic technique to prove borel's strong law of large numbers.
\newblock {\em The American Mathematical Monthly}, 98(2):146--148, 1991.

\bibitem[YDPR19]{young2019review}
Aaron~R Young, Mark~E Dean, James~S Plank, and Garrett~S Rose.
\newblock A review of spiking neuromorphic hardware communication systems.
\newblock {\em IEEE Access}, 7:135606--135620, 2019.

\end{thebibliography}

\Addresses

\end{document}